\theoremstyle{plain}
 \newtheorem{thm}{\textbf{Theorem}}[section]
 \newtheorem{lem}{\textbf{Lemma}}[section]
\theoremstyle{definition}
\theoremstyle{remark}
 \newtheorem{rem}{\textbf{Remark}}[section]
 \numberwithin{equation}{section}
\renewcommand{\leq}{\leqslant}
\renewcommand{\geq}{\geqslant}
\title{Reducing Subspaces of de Branges-Rovnyak Spaces}
\subjclass[2010]{Primary 47B32}
\author[Chu]{\bfseries Cheng Chu}
\address{
Department of Mathematics \\ 
Vanderbilt University  \\ 
Nashville, Tennessee \\
USA}
\email{cheng.chu@vanderbilt.edu}
\begin{document}

\vspace{18mm}
\setcounter{page}{1}
\thispagestyle{empty}

\begin{abstract}
For $b\in H^\infty_1$, the closed unit ball of $H^\infty$, the de Branges-Rovnyak spaces $\mathcal{H}(b)$ is a Hilbert space contractively contained in the Hardy space $H^2$ that is invariant by the backward shift operator $S^*$. We consider the reducing subspaces of the operator $S^{*2}|_{\mathcal{H}(b)}$.

When $b$ is an inner function, $S^{*2}|_{\mathcal{H}(b)}$ is a truncated Toepltiz operator and its reducibility was characterized by Douglas and Foias using model theory. We use another approach to extend their result to the case where $b$ is extreme. We prove that if $b$ is extreme but not inner, then $S^{*2}|_{\mathcal{H}(b)}$ is reducible if and only if $b$ is even or odd, and describe the structure of reducing subspaces.
\end{abstract}

\maketitle

\section{Introduction}  

Let $\DD$ denote the unit disk.
Let $L^2$ denote the Lebesgue space of square integrable functions on the unit circle $\TT$. The Hardy space $H^2$ is the subspace of analytic functions on $\DD$ whose Taylor coefficients are square summable. Then it can also be identified with the subspace of $L^2$ of functions whose negative Fourier coefficients vanish. 
The space of bounded analytic functions on the unit disk is denoted by $H^\infty$.
The Toeplitz operator on the Hardy space $H^2$ with symbol $f$ in $L^\infty(\DD)$ is defined by
$$T_f (h) = P(fh),$$ for $h\in H^2(\DD)$. Here $P$ be the orthogonal projections from $L^2$ to $H^2$. The unilateral shift operator on $H^2$ is $S=T_z$.

Let $A$ be a bounded operator on a Hilbert space $H$. We define the range space $\cM(A)=AH$, and endow it with the inner product
$$\langle Af, Ag \rangle_{\mathcal{M}(A)}=\langle f, g \rangle_{H},\qq f,g\in H \ominus \m{Ker}A.$$ $\cM(A)$ has a Hilbert space structure that makes $A$ a coisometry on $H$.

Let $b$ be a function in $H^\infty_1$, the closed unit ball of $H^\infty$. The de Branges-Rovnyak space $\cH(b)$ is defined to be the space $$(I-T_b T_{\bar b})^{1/2} H^2.$$
We also define the space $\cH(\bar b)$ in the same way as $\cH(b)$, but with the roles of $b$ and $\bar{b}$ interchanged, i.e. $$\cH(\bar{b})=(I-T_{\bar b} T_b)^{1/2} H^2.$$ 
The spaces $\cH(b)$ and $\cH(\bar b)$ are also called sub-Hardy Hilbert spaces (the terminology comes from the title of Sarason's book \cite{sar94}).

The space $\cH(b)$ was introduced by de Branges and Rovnyak \cite{deb-rov1}. Sarason and several others made essential contributions to the theory \cite{sar94}. A recent two-volume monograph \cite{fri16-1}, \cite{fri16-2} presents most of the main developments in this area.

There are two special cases for $\cH(b)$ spaces. If $||b||_\infty<1$, then $\cH(b)$ is just a re-normed version of $H^2$. If $b$ is an inner function, then $$\cH(b)=H^2\ominus bH^2$$ is a closed subspace of $H^2$, the so-called model space (see \cite{garros} for a brief survey).

Let $T$ be a bounded linear operator on a Hilbert space $\cH$. A closed subspace $M$ of $\cH$ is called a reducing subspace of $T$ if $TM\subset M$ and $T^*M\subset M$. If $T$ has a proper reducing subspace, $T$ is called reducible. The reducing subspaces of shift operators or multiplication operators have been studied in various function spaces: for weighted unilateral shift operators of finite multiplicity, see \cite{ste-zhu}; for multiplication operators induced by finite Blaschke products on the Bergman space, see \cite{zhu00}, \cite{guo-huang} and the references therein. 

Our motivation is the study of reducing subspaces of truncated Toeplitz operators on the model space. For an inner function $\Gt$ and $\Gvp\in L^2$, the truncated Toeplitz operator $A^\Gt_\Gvp$ with symbol $\Gvp$ is defined by 
$$
A^\Gt_\Gvp f=P_\Gt(\Gvp f),
$$
for $f$ on the dense subset $\cH(\Gt)\cap H^\infty$ of $\cH(\Gt)$. Here $P_\Gt$ is the orthogonal projection from $H^2$ to $\cH(\Gt)$.
It is known that $A_z^{\Gt}$ is always irreducible (see e.g. \cite{garros14}). A function $f\in L^2$ is called even if $f(z)=f(-z)$, for every $z\in\DD$, and $f$ is called odd if $f(z)=-f(-z)$, for every $z\in\DD$. The operator $A_z^\Gt$ is called the compressed shift operator.
The reducibility of $A_{z^2}^{\Gt}$ is characterized by Douglas and Foias \cite{dou-foi} using model theory for contractions \cite{szn-foi} as the following. 
\begin{thm}\label{inn}
The operator $A_{z^2}^{\Gt}$ is reducible if and only if either
$\Gt$ is even or there exists $\mu\in\DD$ such that
$$
\Gt(z)=p(z)\frac{z+\mu}{1+\bar{\mu}z},
$$
where $p$ is even.
\end{thm}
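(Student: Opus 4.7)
The plan is to split the proof into the two directions. The $(\Leftarrow)$ direction I would handle concretely by exhibiting reducing subspaces; the $(\Rightarrow)$ direction is the main obstacle, for which I would invoke Sz.-Nagy--Foias model theory.

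For sufficiency, first suppose $\Gt$ is even. The parity involution $J:H^2\to H^2$ given by $(Jf)(z)=f(-z)$ is a self-adjoint unitary satisfying $JT_{z^2}=T_{z^2}J$. Because $\Gt(-z)=\Gt(z)$, $J$ preserves $\Gt H^2$ and hence also $\cH(\Gt)$, so $J$ commutes with $A_{z^2}^{\Gt}=P_\Gt T_{z^2}|_{\cH(\Gt)}$ and with its adjoint. The $\pm 1$-eigenspaces of $J|_{\cH(\Gt)}$, namely the even and odd functions in $\cH(\Gt)$, then give a non-trivial reducing decomposition whenever $\dim\cH(\Gt)\ge 2$. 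Next, for $\Gt(z)=p(z)\frac{z+\mu}{1+\bar\mu z}$ with $p$ even, I write $p(z)=q(z^2)$ for an inner function $q$ and propose the candidate
\[
M=\bigl\{(z+\mu)F(z^2):F\in\cH(q)\bigr\}.
\]
I would verify $M\subset\cH(\Gt)$ using the factorization $z+\mu=\frac{z+\mu}{1+\bar\mu z}\,(1+\bar\mu z)$ together with the isometry of multiplication by $\frac{z+\mu}{1+\bar\mu z}$ on $L^2(\TT)$, and then establish the intertwinings
\[
A_{z^2}^{\Gt}\bigl((z+\mu)F(z^2)\bigr)=(z+\mu)(A_w^q F)(z^2),\qquad S^{*2}\bigl((z+\mu)F(z^2)\bigr)=(z+\mu)(S^*F)(z^2),
\]
by direct computation; these imply $M$ is invariant under both $A_{z^2}^{\Gt}$ and its adjoint $S^{*2}|_{\cH(\Gt)}$, so $M$ is a non-trivial reducing subspace of $A_{z^2}^{\Gt}$.

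For necessity, suppose $A_{z^2}^{\Gt}$ has a non-trivial reducing subspace. Since $A_z^{\Gt}$ is a completely non-unitary contraction on $\cH(\Gt)$ with scalar characteristic function $\Gt$, the square $A_{z^2}^{\Gt}$ is also CNU, with defect indices generically $(2,2)$ and a $2\times 2$ matrix-valued inner characteristic function $\Psi$. By Sz.-Nagy--Foias, reducibility of $A_{z^2}^{\Gt}$ is equivalent to $\Psi$ being equivalent, under constant unitary changes of basis on the defect spaces, to a diagonal direct sum of two scalar inner functions. The main obstacle is the explicit computation of $\Psi$ from $\Gt$: using the minimal isometric dilation of $A_z^{\Gt}$ (the unilateral shift $S$ on $H^2$), the entries of $\Psi$ involve both $\Gt(z)$ and $\Gt(-z)$, the two preimages under $z\mapsto z^2$. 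A case analysis of when such a $2\times 2$ matrix admits a non-trivial direct-sum decomposition forces either $\Gt(-z)=\Gt(z)$ (so $\Gt$ is even) or the quotient $\Gt(-z)/\Gt(z)$ to reduce to a single Blaschke factor in $\DD$; the latter condition on the zero/pole structure of $\Gt(-z)/\Gt(z)$ is precisely equivalent to $\Gt(z)=p(z)\frac{z+\mu}{1+\bar\mu z}$ with $p$ even.
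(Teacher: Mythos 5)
First, note that the paper does not actually prove Theorem~\ref{inn}: it is quoted from Douglas and Foias, and the paper's own contribution is an elementary, model-theory-free method (Theorem~\ref{m1m2} together with Lemma~\ref{2nm}) developed for the extreme case. Your sufficiency argument is essentially correct and self-contained: the parity involution $J f(z)=f(-z)$ does commute with $A_{z^2}^{\Gt}$ when $\Gt$ is even, and the subspace $M=\{(z+\mu)F(z^2):F\in\cH(q)\}$ with $p(z)=q(z^2)$ is indeed reducing (the intertwinings you state check out, using $(z+\mu)q(z^2)H^2\subset\Gt H^2$). You should still verify that the relevant subspaces are nontrivial and proper --- e.g.\ that both parity eigenspaces of $\cH(\Gt)$ are nonzero (exhibit the even function $k_0^{\Gt}$ and the odd function $S^*\Gt$), and that $M\neq\cH(\Gt)$ (every element of $M$ vanishes at $-\mu$ while $k_{-\mu}^{\Gt}\neq 0$) --- but these are routine.

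The necessity direction, however, has a genuine gap: it is a plan, not a proof. The three load-bearing steps --- (a) the explicit computation of the $2\times 2$ characteristic function $\Psi$ of $(A_z^{\Gt})^2$ in terms of $\Gt(z)$ and $\Gt(-z)$, (b) the Sz.-Nagy--Foias equivalence between reducing decompositions and splittings of $\Psi$ by constant unitaries, and (c) the case analysis showing that such a splitting forces $\Gt$ even or $\Gt=p\cdot\frac{z+\mu}{1+\bar\mu z}$ --- are all asserted rather than carried out, and you yourself flag (a) as ``the main obstacle.'' Step (a) and step (c) are precisely the technical content of the Douglas--Foias paper; in particular your closing claim that the splitting condition amounts to ``$\Gt(-z)/\Gt(z)$ reducing to a single Blaschke factor'' is not obviously the right dichotomy (for $\Gt=p\,b_\mu$ with $p$ even, the quotient $\Gt(-z)/\Gt(z)=b_\mu(-z)/b_\mu(z)$ is a degree-two rational inner quotient, not a single factor). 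A cleaner route, and the one implicit in this paper, is to specialize Theorem~\ref{m1m2} and Lemma~\ref{2nm} to inner $\Gt$: condition \eqref{0} is vacuous since $\cH(\bar\Gt)$ is trivial, so reducibility forces $S^*\Gt+\Ga S^{*2}\Gt=F(z^2)(a_0+a_1z)$ and $\Gb S^*\Gt+S^{*2}\Gt=G(z^2)(b_0+b_1z)$ with $(a_0,a_1)\perp(b_0,b_1)$ in $\CC^2$, and analyzing the Taylor coefficients of $\Gt$ from these identities yields the parity/single-Blaschke-factor dichotomy by elementary means. As it stands, your necessity argument would need either that elementary analysis or the full Douglas--Foias computation to be a proof.
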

Recently, Li, Yang and Lu found a different proof of Theorem \ref{inn} and extended it to the case where the symbol of a truncated Toeplitz operator is a Blaschke product of order $2$ or $3$ \cite{lyl}.  

The theory of $\cH(b)$ spaces is pervaded by a fundamental dichotomy, when $b$ is an extreme point of $H^\infty_1$ and when it is not. The nonextreme case includes $||b||_\infty<1$ and the extreme case includes $b$ is an inner function. Roughly speaking, when $b$ is nonextreme, $\cH(b)$ behaves similar to $H^2$, while in the extreme case, $\cH(b)$ is more closely related to the model space. For example, the polynomials belong to $\cH(b)$ if and only if $b$ is non-extreme (see \cite{sar94}*{Chapter IV, V}).

Notice that $(A_{z^2}^{\Gt})^*=S^{*2}|_{\cH(\Gt)}$. Thus, in view of Theorem \ref{inn},  it is natural to consider reducing subspaces of $S^{*2}|_{\cH(b)}$ when $b$ is extreme. The main purpose of this paper is to characterize the reducibility of $S^{*2}|_{\cH(b)}$ on $\cH(b)$ in the extreme case and describe the reducing subspaces (Theorem \ref{M}). We also show that $X_b$ is irreducible for every $b$.

\section{Background on de Branges-Rovnyak Spaces}

In this section, we present some basic theory of de Branges-Rovnyak spaces and the results we shall use later. 

The relation between $\cH(b)$ and $\cH(\bar b)$ can be found in \cite{sar94}*{II-4}. Here we use $\la \,\, , \,\, \ra_b$ to denote the inner product of $\cH(b)$.
\begin{thm}\label{barb}
A function $f$ belongs to $\cH(b)$ if and only if $T_{\bar b}f$ belongs to $\cH(\bar b)$. If $f_1, f_2\in \cH(b)$, then
$$
\la f_1, f_2 \ra_b=\la f_1, f_2 \ra_2+\la T_{\bar b}f_1, T_{\bar b}f_2\ra_{\bar b}.
$$
\end{thm}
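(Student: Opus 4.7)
The plan is to introduce $A=(I-T_bT_{\bar b})^{1/2}$ and $B=(I-T_{\bar b}T_b)^{1/2}$ so that $\cH(b)=AH^2$ and $\cH(\bar b)=BH^2$, and, by the range-space construction given in the introduction, $\|Ag\|_b=\|g\|_2$ whenever $g\in(\ker A)^\perp$ (and similarly for $B$). The whole proof will rest on the two intertwining relations
$$T_{\bar b}A=BT_{\bar b},\qquad AT_b=T_bB.$$
These follow from the trivial algebraic identities $T_{\bar b}(I-T_bT_{\bar b})=(I-T_{\bar b}T_b)T_{\bar b}$ and $(I-T_bT_{\bar b})T_b=T_b(I-T_{\bar b}T_b)$, which give $T_{\bar b}A^2=B^2T_{\bar b}$ and $A^2T_b=T_bB^2$. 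To pass from squares to first powers I would invoke the continuous functional calculus for the positive contractions $A^2$ and $B^2$: approximating $\sqrt{x}$ uniformly on $[0,1]$ by polynomials $p_n$ gives $p_n(A^2)\to A$ and $p_n(B^2)\to B$ in operator norm, and the polynomial intertwining $T_{\bar b}p_n(A^2)=p_n(B^2)T_{\bar b}$ survives the limit.

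For the forward direction, assume $f\in\cH(b)$ and write $f=Ag$ with $g\in(\ker A)^\perp$, so $\|f\|_b=\|g\|_2$. Then $T_{\bar b}f=T_{\bar b}Ag=BT_{\bar b}g\in BH^2=\cH(\bar b)$. For the reverse direction, suppose $T_{\bar b}f=Bh$ for some $h\in H^2$. Using $T_bB=AT_b$,
$$T_bT_{\bar b}f=T_bBh=AT_bh,$$
and since $A^2=I-T_bT_{\bar b}$ this rearranges to
$$f=A^2f+T_bT_{\bar b}f=A(Af+T_bh)\in AH^2=\cH(b).$$

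For the inner-product formula, by polarization it is enough to prove the norm identity $\|f\|_b^2=\|f\|_2^2+\|T_{\bar b}f\|_{\bar b}^2$. With $f=Ag$, $g\in(\ker A)^\perp$ as above, the intertwining gives $T_{\bar b}f=BT_{\bar b}g$. I also need $T_{\bar b}g\in(\ker B)^\perp$ in order to conclude $\|T_{\bar b}f\|_{\bar b}=\|T_{\bar b}g\|_2$. That containment $T_{\bar b}((\ker A)^\perp)\subset(\ker B)^\perp$ is the adjoint form of $T_b(\ker B)\subset\ker A$, which is immediate from $A^2T_b=T_bB^2$: if $u\in\ker B$ then $A^2T_bu=T_bB^2u=0$, so $T_bu\in\ker A$. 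Then
$$\|f\|_2^2=\langle A^2g,g\rangle_2=\|g\|_2^2-\|T_{\bar b}g\|_2^2,$$
and adding $\|T_{\bar b}f\|_{\bar b}^2=\|T_{\bar b}g\|_2^2$ yields $\|f\|_2^2+\|T_{\bar b}f\|_{\bar b}^2=\|g\|_2^2=\|f\|_b^2$.

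The main obstacle is the functional-calculus upgrade from $T_{\bar b}A^2=B^2T_{\bar b}$ to $T_{\bar b}A=BT_{\bar b}$; once this is in hand, both implications of the iff and the Pythagorean norm identity reduce to short computations, with the only care needed being the kernel bookkeeping so that preimages under $A$ and $B$ recover the range-space norms correctly.
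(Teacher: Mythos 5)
The paper does not actually prove this statement; it is quoted from Sarason's book (\cite{sar94}, II-4) as background, so there is no in-paper argument to compare against. Your proof is correct and self-contained, and it is essentially the standard argument behind Sarason's theorem: the only nontrivial ingredient is the intertwining $T_{\bar b}(I-T_bT_{\bar b})^{1/2}=(I-T_{\bar b}T_b)^{1/2}T_{\bar b}$, which you correctly obtain from the trivial identity for the squares via polynomial approximation of $\sqrt{x}$ on $[0,1]$ and continuity of the functional calculus (in Sarason's book this is a general lemma: $XA=BX$ with $A,B\geq 0$ implies $X\varphi(A)=\varphi(B)X$ for continuous $\varphi$, and the theorem itself is the specialization to $T_b$ of a general fact relating $\mathcal{H}(A)$ and $\mathcal{H}(A^*)$ for a Hilbert-space contraction $A$). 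The two points where such arguments usually go wrong are both handled: in the reverse implication you produce an explicit preimage $Af+T_bh$ rather than arguing abstractly, and in the norm identity you verify the kernel bookkeeping $T_{\bar b}\bigl((\ker A)^{\perp}\bigr)\subset(\ker B)^{\perp}$, without which $\|BT_{\bar b}g\|_{\bar b}$ would only be bounded by $\|T_{\bar b}g\|_2$ instead of equal to it. The final passage from the norm identity to the inner-product identity by polarization is legitimate since both sides are Hermitian sesquilinear forms on $\mathcal{H}(b)$. I see no gap.
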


Let $b\in H^\infty_1$. Let $\rho=1-|b|^2$ on $\TT$ and let $H^2(\rho)$ be the closure of polynomials in $L^2(\rho)=L^2(\TT, \rho \frac{d\Gt}{2\pi})$ (we will keep using these notations in the remaining of this paper). The Cauchy transform $K_\rho$ is the mapping from $L^2(\rho)$ to $H^2$ defined by
$$
K_\rho f=P(\rho f).
$$
In the theory of $\cH(b)$ spaces, $\cH(\bar b)$ is often more amenable than $\cH(b)$ because of a representation theorem for $\cH(\bar b)$ \cite[III-2]{sar94}.
\begin{thm}\label{rho}
The operator $K_\rho$ is an isometry from $H^2(\rho)$ to $\cH(\bar b)$.
\end{thm}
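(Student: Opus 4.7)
The strategy is the standard ``verify on a dense set, then extend by density.'' Polynomials are dense in $H^2(\rho)$ by definition, so it suffices to prove the isometric identity there and then use boundedness of $K_\rho$ on $L^2(\rho)$ to pass to the closure. The first step is a short algebraic identity. If $p$ is a polynomial, then $bp\in H^2$, so on the circle one has $\rho p = p - \bar b(bp)$, and applying $P$ gives
\[
K_\rho p = p - T_{\bar b}(bp) = (I-T_{\bar b}T_b)p.
\]
In particular $K_\rho p$ lies in the range of $I-T_{\bar b}T_b$, which is contained in the range of its positive square root $A:=(I-T_{\bar b}T_b)^{1/2}$; by definition $A H^2 = \cH(\bar b)$, so $K_\rho p\in\cH(\bar b)$.

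Next I would compute $\|K_\rho p\|_{\cH(\bar b)}$ directly from the range-space inner product. Writing $K_\rho p = A(Ap)$, I note that self-adjointness of $A$ forces $\mathrm{range}(A)\subset(\ker A)^\perp$, so $Ap\in H^2\ominus\ker A$, and the very definition of the $\cM(A)$-norm then yields
\[
\|K_\rho p\|_{\cH(\bar b)}^2 = \|Ap\|_{H^2}^2 = \langle A^2 p, p\rangle_2 = \|p\|_2^2 - \|bp\|_2^2.
\]
The last expression equals $\int_\TT(1-|b|^2)|p|^2\,d\theta/(2\pi) = \|p\|_{L^2(\rho)}^2$, which is the isometric identity on the dense set of polynomials.

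To extend to all of $H^2(\rho)$ I need $K_\rho$ to be defined and bounded on $L^2(\rho)$. Since $0\le\rho\le 1$, the pointwise estimate gives $\int_\TT|\rho f|^2 \le \int_\TT\rho|f|^2 = \|f\|_{L^2(\rho)}^2$, so $\rho f\in L^2$ and $\|K_\rho f\|_2\le\|f\|_{L^2(\rho)}$; combined with the isometric identity on polynomials and the completeness of $\cH(\bar b)$, the map extends uniquely to an isometry $H^2(\rho)\to\cH(\bar b)$. The main subtle point is the second paragraph: the $\cM(A)$-norm is defined on the quotient $H^2/\ker A$, so one must justify that the particular preimage $Ap$ of $K_\rho p$ lies in $(\ker A)^\perp$ before equating $\|K_\rho p\|_{\cH(\bar b)}$ with $\|Ap\|_{H^2}$. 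This is where self-adjointness of $A$, and hence the equality $\ker A=\ker A^2$ together with the orthogonality $\mathrm{range}(A)\perp\ker A$, is essential; everything else reduces to straightforward computation on the circle.
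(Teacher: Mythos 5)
Your proof is correct, and it is essentially the standard argument (the paper does not prove this statement but quotes it from Sarason's book, where the proof runs along the same lines: verify $K_\rho p=(I-T_{\bar b}T_b)p$ and the isometric identity on polynomials, then extend by density). The key steps all check out: $A p$ is a legitimate representative of $K_\rho p$ in $(\ker A)^\perp$ because $A=(I-T_{\bar b}T_b)^{1/2}$ is self-adjoint, and $\langle A^2p,p\rangle_2=\|p\|_2^2-\|bp\|_2^2=\|p\|_{L^2(\rho)}^2$. The only point worth making explicit in the final density step is the identification of the two limits: if $p_n\to f$ in $L^2(\rho)$, the isometric identity makes $K_\rho p_n$ Cauchy in $\cH(\bar b)$, hence convergent there to some $g$, while your bound $\|K_\rho(f-p_n)\|_2\le\|f-p_n\|_{L^2(\rho)}$ gives $K_\rho p_n\to K_\rho f$ in $H^2$; to conclude $g=K_\rho f$ you should also note that the inclusion $\cH(\bar b)\hookrightarrow H^2$ is contractive (since $\|Au\|_2\le\|u\|_2=\|Au\|_{\cH(\bar b)}$ for $u\perp\ker A$), so $\cH(\bar b)$-convergence implies $H^2$-convergence. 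With that one sentence added, the argument is complete.
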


The operator on $H^2(\rho)$ of multiplication by the independent variable will be denoted by $Z_\rho$. We have the intertwining relation \cite{sar94}*{III-3} 
\beq\label{int}
K_\rho Z_\rho^*=S^* K_\rho.
\eeq

The space $\cH(b)$ is invariant under $S^*=T_{\bz}$ \cite{sar94}*{II-7}, and the restriction of $S^*$ is a contraction. We use $X_b$ to denote $S^*|_{\cH(b)}$. This operator can serve as a model for a large class of Hilbert space contractions \cite{deb-rov1}, \cite{deb-rov2}.

The following identity shows the difference between $X_b$ and $S^*$ \cite{sar94}*{II-9}.
\begin{thm}\label{x^*}
Let $b\in H^\infty_1$. For every $f\in \cH(b)$,
$$
X_b^*f=Sf- \la f, S^*b\ra_b b.
$$
\end{thm}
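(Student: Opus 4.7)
My approach exploits the reproducing-kernel structure of $\cH(b)$: it is a reproducing kernel Hilbert space with kernel
$$k^b_w(z) = \frac{1-\overline{b(w)}b(z)}{1-\bar w z},$$
so $h(w) = \la h, k^b_w\ra_b$ for every $h \in \cH(b)$ and $w \in \DD$. I take for granted the standard fact that $S^*b \in \cH(b)$ (needed for the pairing $\la f, S^*b\ra_b$ in the statement to be meaningful); this can be seen, for instance, by applying the $S^*$-invariance of $\cH(b)$ to the kernel value $k^b_0 = 1 - \overline{b(0)}b$.

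The crux of the argument is to derive an explicit formula for $X_b k^b_w = S^* k^b_w$. Starting from the defining identity $(1-\bar w z) k^b_w(z) = 1 - \overline{b(w)}b(z)$ and subtracting $k^b_w(0) = 1 - \overline{b(w)}b(0)$, one obtains
$$k^b_w(z) - k^b_w(0) = \bar w z\, k^b_w(z) - \overline{b(w)}\bigl(b(z) - b(0)\bigr).$$
Dividing by $z$ and using $b(z) - b(0) = z\, S^*b(z)$ yields the clean identity
$$S^* k^b_w = \bar w\, k^b_w - \overline{b(w)}\, S^*b.$$

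With this identity the theorem follows in one line from the reproducing property: for any $f \in \cH(b)$ and $w \in \DD$,
$$
X_b^* f(w) = \la X_b^* f, k^b_w\ra_b = \la f, S^* k^b_w\ra_b = w\, f(w) - b(w)\,\la f, S^*b\ra_b,
$$
which is precisely $\bigl(Sf - \la f, S^*b\ra_b\, b\bigr)(w)$. Since $X_b^* f$ already lies in $\cH(b)$ and the two analytic functions on $\DD$ coincide pointwise, they are equal as elements of $\cH(b)$; as a byproduct this verifies that $Sf - \la f, S^*b\ra_b\, b$ does belong to $\cH(b)$, even though $Sf$ and $b$ need not individually. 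The only real subtlety is the algebraic derivation of $S^*k^b_w$; once that is in hand, the remaining steps are routine reproducing-kernel bookkeeping.
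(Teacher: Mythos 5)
The paper does not prove this statement at all: it is quoted as background from Sarason's book (II-9), so there is no in-paper argument to compare against. Your proof is correct and is essentially the standard derivation. The computation $S^*k^b_w = \bar w\,k^b_w - \overline{b(w)}\,S^*b$ is right, the adjoint identity $\la X_b^*f, k^b_w\ra_b = \la f, X_b k^b_w\ra_b$ is legitimate because $X_b$ is a bounded operator on $\cH(b)$ and $k^b_w\in\cH(b)$, and the passage from pointwise equality of analytic functions to equality in $\cH(b)$ is fine. One small caveat: your parenthetical justification that $S^*b\in\cH(b)$ by applying $S^*$-invariance to $k^b_0 = 1-\overline{b(0)}b$ only works when $b(0)\neq 0$ (if $b(0)=0$ then $k^b_0=1$ and $S^*k^b_0=0$, which tells you nothing); since you explicitly take the fact for granted anyway --- and the paper itself cites it from Sarason II-8 --- this does not affect the proof, but the aside as stated is not a complete derivation. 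Note also that this fact cannot be obtained as a consequence of your kernel identity without circularity, since that identity presupposes $S^*b\in\cH(b)$ to make sense as an equation in $\cH(b)$; logically it must come first, which is exactly the order in Sarason's book.
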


If $x$ and $y$ are in a Hilbert space $\cH$, we shall use $x\otimes y$ to be the following rank one operator on $\cH$
$$
(x\otimes y)(f)=\langle f,y\rangle_\cH \cdot x,\q f\in \cH.
$$
It is obvious that 
$$(x\otimes y)^*=y\otimes x,$$
and if $A, B$ are bounded linear operators on $\cH$, then
$$A(x\otimes y)B=(Ax)\otimes(B^* y).$$

It could be misleading to write the identity in Theorem \ref{x^*} as $X_b^*=S-b\otimes S^*b$ because $b$ may not be in $\cH(b)$. But it is known that $S^*b\in \cH(b)$ \cite{sar94}*{II-8}, and we have
\beq\label{xx*}
I-X_b X_b^*=(S^*b)\otimes (S^*b).
\eeq

The space $\cH(b)$ is a reproducing kernel Hilbert space with kernel function: 
$$
k_w^b(z)=\frac{1-\ol{b(w)}b(z)}{1-\bw z}.
$$
When $b$ is extreme, we have the following identity (see e.g. \cite{fri16-2}*{Theorem 25.11}).
\begin{lem}\label{x*x}
Let $b$ be an extreme point in $H^\infty_1$. Then
$$
I-X_b^*X_b=k_0^{b}\otimes k_0^{b}.
$$
\end{lem}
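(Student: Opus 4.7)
The plan is to turn the operator identity into a scalar quadratic form identity, expand it via Theorem \ref{barb}, and then interpret the remaining term through the $K_\rho$ model of $\cH(\bar b)$. Extremality of $b$ will enter only at the very end, as the Szeg\H{o} condition that makes $Z_\rho$ unitary on $H^2(\rho)$.

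First, since both $I - X_b^* X_b$ and $k_0^b \otimes k_0^b$ are self-adjoint, by polarization it suffices to match their quadratic forms on $\cH(b)$. Using the reproducing property $\la f, k_0^b \ra_b = f(0)$, the right-hand side gives $\la (k_0^b \otimes k_0^b) f, f\ra_b = |f(0)|^2$, so the claim becomes
$$
\|f\|_b^2 - \|X_b f\|_b^2 = |f(0)|^2, \qquad f \in \cH(b).
$$
Applying Theorem \ref{barb} to $f$ and to $X_b f = S^* f \in \cH(b)$, and using the commutation $T_{\bar b} S^* = S^* T_{\bar b}$ (both are Toeplitz operators with co-analytic symbols, so $T_{\bar b}T_{\bar z}=T_{\bar b \bar z}=T_{\bar z}T_{\bar b}$), I compute
$$
\|f\|_b^2 - \|X_b f\|_b^2 = \bigl(\|f\|_2^2 - \|S^* f\|_2^2\bigr) + \bigl(\|T_{\bar b} f\|_{\bar b}^2 - \|S^* T_{\bar b} f\|_{\bar b}^2\bigr),
$$
in which $\|f\|_2^2 - \|S^* f\|_2^2 = |f(0)|^2$ is standard on $H^2$. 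Thus the entire claim reduces to showing that $S^*$ acts isometrically on vectors of the form $T_{\bar b} f \in \cH(\bar b)$.

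Next I would transfer this isometry question to $H^2(\rho)$. By Theorem \ref{rho}, write $T_{\bar b} f = K_\rho \tilde g$ with $\tilde g \in H^2(\rho)$ and $\|T_{\bar b} f\|_{\bar b} = \|\tilde g\|_{H^2(\rho)}$. The intertwining \eqref{int} together with the isometric property of $K_\rho$ give
$$
\|S^* T_{\bar b} f\|_{\bar b} = \|K_\rho Z_\rho^* \tilde g\|_{\bar b} = \|Z_\rho^* \tilde g\|_{H^2(\rho)},
$$
so it suffices to prove that $Z_\rho^*$ is isometric on $H^2(\rho)$.

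Finally, this is where extremality of $b$ is used. Extremality in $H^\infty_1$ is equivalent to $\log(1-|b|^2) = \log \rho \notin L^1(\TT)$, and by Szeg\H{o}'s theorem this is precisely the condition that polynomials are dense in $L^2(\rho)$, i.e.\ $H^2(\rho) = L^2(\rho)$. Since multiplication by the unimodular function $z$ is unitary on $L^2(\rho)$, the operator $Z_\rho$ is unitary on $H^2(\rho)$, and in particular $Z_\rho^*$ is an isometry. The principal subtlety lies exactly here: one must recognize that extremality of $b$ is the Szeg\H{o} condition on $\rho$, which is what upgrades the a priori inequality $\|Z_\rho^* \tilde g\|_{H^2(\rho)} \le \|\tilde g\|_{H^2(\rho)}$ to equality and closes every earlier step. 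Everything else is bookkeeping with Theorems \ref{barb} and \ref{rho} and the commutation of co-analytic Toeplitz operators.
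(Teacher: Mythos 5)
Your proof is correct. Note that the paper does not actually prove this lemma: it is quoted from the literature (Frankfurt--Mashreghi--Ransford, Theorem 25.11), so there is no in-paper argument to compare against. What you have written is a complete, self-contained derivation built entirely from the tools the paper assembles elsewhere: the reduction to the quadratic form $\|f\|_b^2-\|X_bf\|_b^2=|f(0)|^2$ is sound; the splitting via Theorem \ref{barb} together with the commutation $T_{\bar b}T_{\bar z}=T_{\bar z}T_{\bar b}$ correctly isolates the $\cH(\bar b)$ contribution; and the transfer to $H^2(\rho)$ via Theorem \ref{rho} and the intertwining \eqref{int}, followed by the Szeg\H{o}/de Leeuw--Rudin characterization of extremality ($\log\rho\notin L^1$, hence $H^2(\rho)=L^2(\rho)$ and $Z_\rho$ unitary), is exactly the mechanism the author himself invokes in the proof of Lemma \ref{bb}. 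Two small points worth making explicit: (i) when you write $T_{\bar b}f=K_\rho\tilde g$ you are using that $K_\rho$ maps $H^2(\rho)$ \emph{onto} $\cH(\bar b)$, which is part of Sarason's theorem even though the paper's Theorem \ref{rho} only says ``isometry from \dots to''; (ii) your argument shows cleanly where extremality is indispensable --- without it $Z_\rho^*$ is merely a contraction and one only gets $I-X_b^*X_b\geq k_0^b\otimes k_0^b$ --- which is a nice dividend of this route.
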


For an inner function $\Gt$,  $S^*\Gt$ is a cyclic vector of $(A_z^\Gt)^*$. A similar result holds for extreme functions (see e.g. \cite{fri16-2}*{Section 26.6}).
\begin{thm}\label{ex}
If $b$ is extreme, then 
$$\cH(b)=\m{Span}\{S^{*n} b: n\geq 0\}.$$
\end{thm}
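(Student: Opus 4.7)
The plan is to show that the orthogonal complement (in $\mathcal{H}(b)$) of $M := \overline{\mathrm{span}}\{S^{*n}b : n\ge 1\}$ is trivial. The vectors $S^{*n}b$ for $n\ge 1$ belong to $\mathcal{H}(b)$ because $S^*b\in\mathcal{H}(b)$ and $\mathcal{H}(b)$ is $S^*$-invariant, so $M\subseteq\mathcal{H}(b)$. Writing $N := \mathcal{H}(b)\ominus M$, the $X_b$-invariance of $M$ makes $N$ an $X_b^*$-invariant subspace, and it suffices to prove $N = \{0\}$.

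The key first observation is that for $f\in N$, orthogonality to $S^*b$ collapses Theorem~\ref{x^*} to the clean identity $X_b^* f = Sf$. Iterating within the $X_b^*$-invariant subspace $N$ gives $(X_b^*)^n f = S^n f \in N$ for every $n\ge 0$, so every forward shift $S^n f$ lies in $\mathcal{H}(b)$. Moreover, \eqref{xx*} yields $\|X_b^* g\|_b^2 = \|g\|_b^2 - |\langle g, S^*b\rangle_b|^2 = \|g\|_b^2$ for $g\in N$, so $X_b^*$ acts isometrically on $N$. Equivalently, multiplication by $z$ is an $\mathcal{H}(b)$-isometry on $N$; decomposing through Theorem~\ref{barb} then shows that $\|T_{\bar b}(z^n f)\|_{\bar b}^2 = \|T_{\bar b} f\|_{\bar b}^2$ for every $n\ge 0$.

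The decisive step is to leverage extremeness of $b$. Here I would apply Lemma~\ref{x*x}: the identity $I - X_b^* X_b = k_0^b\otimes k_0^b$ applied to $f\in N$, combined with $X_b^* X_b f = X_b^*(S^*f) = f - f(0) - \langle S^*f, S^*b\rangle_b\, b$ from Theorem~\ref{x^*}, yields the scalar relation $\langle S^*f, S^*b\rangle_b = -f(0)\overline{b(0)}$. Applied to each $S^m f\in N$ (for which $(S^m f)(0) = 0$ when $m\ge 1$), the same argument propagates to $\langle S^m f, S^*b\rangle_b = 0$ for every $m\ge 0$. Translating these identities to the $\mathcal{H}(\bar b)$-side via Theorem~\ref{barb}, the isometric Cauchy transform $K_\rho : H^2(\rho)\to \mathcal{H}(\bar b)$ of Theorem~\ref{rho}, and the intertwining relation~\eqref{int}, one obtains a system of orthogonality conditions on $T_{\bar b}f$ in $L^2(\rho)$ which, combined with the constancy of $\|T_{\bar b}(z^n f)\|_{\bar b}$ and the fact that extremeness of $b$ means $\log(1-|b|^2)\notin L^1$, should force $T_{\bar b}f = 0$ and hence $f = 0$.

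The main obstacle is this final implication. In the inner case ($|b|\equiv 1$) there is a one-line argument on the circle: $f\in\mathcal{H}(\Theta)$ together with $\langle f, S^{*n}\Theta\rangle_2 = 0$ for all $n\ge 1$ implies $\bar\Theta f\equiv 0$, whence $f=0$. For general extreme $b$ the analogue requires careful bookkeeping through the $K_\rho$-representation, and the crux is a Szeg\H{o}-type rigidity statement: a constant-norm orbit under multiplication by $z$ in $\mathcal{H}(\bar b)$ must be trivial when $b$ is extreme. Formulating and proving this rigidity cleanly is the bulk of the work.
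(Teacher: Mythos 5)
First, a point of comparison: the paper does not prove Theorem~\ref{ex} at all --- it quotes it from \cite{fri16-2} --- so there is no internal proof to measure your argument against, and it must stand on its own. It does not. Your preliminary reductions are all correct: $N=\mathcal{H}(b)\ominus M$ is $X_b^*$-invariant, Theorem~\ref{x^*} gives $X_b^*f=Sf$ on $N$, hence $S^nf\in N$ for all $n$, and \eqref{xx*} shows $S$ acts isometrically on $N$ in the $\mathcal{H}(b)$ norm. But the step you call decisive is vacuous: the identity $\langle S^mf,S^*b\rangle_b=0$ that you extract from Lemma~\ref{x*x} is already immediate from $S^mf\in N$ and $S^*b\in M$, so Lemma~\ref{x*x} has bought you nothing. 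Worse, the target you then aim for is false: for an extreme, non-inner $b$ with nontrivial inner factor $\theta$ (e.g.\ $b=z\,a$ with $a$ outer, non-inner and $\log(1-|a|^2)\notin L^1$), one has $\ker T_{\bar b}=H^2\ominus\theta H^2=\mathcal{H}(\theta)\neq\{0\}$, and indeed $k_0^b=1\in\mathcal{H}(b)$ satisfies $T_{\bar b}1=0$. So ``force $T_{\bar b}f=0$ and hence $f=0$'' cannot be the right rigidity statement, and your own closing sentence concedes that formulating and proving the correct one ``is the bulk of the work.'' That unproven step is the entire content of the theorem; what you have established rigorously covers only the inner case.

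For what it is worth, here is where extremeness actually has to enter, using exactly the tools the paper sets up. Write $T_{\bar b}f=K_\rho f^+$ with $f^+\in H^2(\rho)$ (Theorem~\ref{rho}); since $b$ is extreme, $H^2(\rho)=L^2(\rho)$ and $Z_\rho$ is unitary, so as in the proof of Lemma~\ref{bb} one has $T_{\bar b}S^{*n}b=-K_\rho\bar z^n$, and Theorem~\ref{barb} gives $\langle f,S^{*n}b\rangle_b=\widehat{(\bar bf-\rho f^+)}(-n)$ for $n\ge1$, while $P(\bar bf-\rho f^+)=T_{\bar b}f-K_\rho f^+=0$ kills the nonnegative coefficients. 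Hence $f\in N$ forces $\bar bf=\rho f^+$ a.e.\ on $\mathbb{T}$, which yields $f=0$ a.e.\ on $\{|b|=1\}$ and $\|f\|_b^2=\int_{\mathbb{T}}|f|^2\rho^{-1}\,dm$. Now if $f\not\equiv0$ then $\log|f|\in L^1$, whereas $\log\rho\notin L^1$, so $\int\log^+(|f|^2/\rho)\,dm\ge 2\int\log|f|\,dm-\int\log\rho\,dm=+\infty$, contradicting $|f|^2/\rho\in L^1$. You mention $\log(1-|b|^2)\notin L^1$ but never connect it to any estimate; some such logarithmic-integrability argument (not a statement about $\ker T_{\bar b}$, and not merely the constancy of $\|T_{\bar b}(z^nf)\|_{\bar b}$) is what is missing.
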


\section{An Equivalent Condition for the Reducibility}

In this section we first prove that $X_b$ is irreducible for every $b$. The idea in the proof will be used to study $X_b^2$.

\begin{thm}
Let $b\in H^\infty_1$. Then $X_b$ is not reducible.
\end{thm}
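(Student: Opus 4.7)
The plan is to exploit the rank-one identity $I-X_bX_b^*=(S^*b)\otimes(S^*b)$ from \eqref{xx*}. Suppose $M$ is a reducing subspace of $X_b$, so that the orthogonal projection $P_M$ commutes with both $X_b$ and $X_b^*$, and therefore with $X_bX_b^*$ and with the rank-one operator $(S^*b)\otimes(S^*b)$. A standard fact is that a projection commutes with a nonzero rank-one self-adjoint operator $v\otimes v$ if and only if $v\in M$ or $v\in M^\perp$. This yields the dichotomy $S^*b\in M$ or $S^*b\in M^\perp$; after possibly replacing $M$ by $M^\perp$ (which also reduces $X_b$) I may assume $S^*b\in M$.

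If $S^*b=0$, then $b$ is a constant, $\cH(b)$ coincides with $H^2$ up to a rescaling of norm, and $X_b=S^*$; irreducibility of the backward shift on $H^2$ is classical, so I may assume $S^*b\neq 0$. Since $M$ is $X_b$-invariant, the vectors $S^{*(n+1)}b=X_b^n(S^*b)$ all lie in $M$ for $n\geq 0$. When $b$ is extreme, Theorem \ref{ex} tells us that the closed linear span of $\{S^{*n}b\}$ exhausts $\cH(b)$, yielding $M=\cH(b)$. In general I would also invoke Theorem \ref{x^*}, which gives
$$
X_b^*(S^*b)=S(S^*b)-\|S^*b\|_b^2\,b=(1-\|S^*b\|_b^2)\,b-b(0),
$$
producing a specific scalar combination of $b$ and the constant $b(0)$ inside $M$; iterating $X_b$ and $X_b^*$, together with the density of polynomials in $\cH(b)$ available in the non-extreme case, should drive $M=\cH(b)$.

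The main obstacle is this final cyclicity step in full generality. For extreme $b$ the argument closes immediately via Theorem \ref{ex}. For non-extreme $b$, however, $S^*b$ need not be cyclic for $X_b$ alone (for example, if $b(z)=z/2$ then $S^*b=1/2$ is a nonzero constant that $X_b=S^*$ kills after one step), so the $X_b^*$-orbit must be brought in, and one must argue that iterating $X_b$ and $X_b^*$ starting from $S^*b$ generates a dense subspace of $\cH(b)$. Handling this uniformly for all $b\in H^\infty_1$ is the delicate point, and it also lays the groundwork for the later analysis of $X_b^2$, whose defect $I-X_b^2(X_b^*)^2=(S^*b)\otimes(S^*b)+(S^{*2}b)\otimes(S^{*2}b)$ is rank two rather than rank one.
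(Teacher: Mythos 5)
Your opening move---using the rank-one defect $I-X_bX_b^*=(S^*b)\otimes(S^*b)$ to force the dichotomy $S^*b\in M$ or $S^*b\in M^\perp$---is exactly the paper's first step, and your handling of the extreme case via Theorem \ref{ex} matches the paper as well. But the non-extreme case is a genuine gap, which you acknowledge yourself: you try to show that the half \emph{containing} $S^*b$ is all of $\cH(b)$ by iterating $X_b$ and $X_b^*$ on $S^*b$, and as your own example $b(z)=z/2$ suggests, there is no obvious reason this orbit is cyclic; the proposal ends with ``should drive $M=\cH(b)$'' rather than an argument.

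The paper closes the non-extreme case by working on the \emph{other} half. Let $M_1$ be the reducing subspace annihilated by $I-X_bX_b^*$, i.e.\ orthogonal to $S^*b$ in $\cH(b)$. For every $f\in M_1$ the formula of Theorem \ref{x^*} collapses to
$$
X_b^*f=Sf-\la f,S^*b\ra_b\, b=Sf,
$$
so $M_1$ is invariant under multiplication by $z$ as well as under $S^*$. Taking a nonzero $h=\sum_{j\geq k}h_jz^j\in M_1$ with $h_k\neq 0$, the identity $(I-S^{k+1}S^{*k+1})h=h_kz^k$ exhibits the monomial $z^k$ as an element of $M_1$; the $S$- and $S^*$-invariance then put every monomial, hence every polynomial, in $M_1$, and density of polynomials in $\cH(b)$ for non-extreme $b$ forces $M_1=\cH(b)$, contradicting the nontriviality of $M_2$. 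The idea you are missing is that orthogonality to $S^*b$ is precisely the condition under which $X_b^*$ acts as the unrestricted forward shift, so the cyclicity problem should be attacked on the subspace orthogonal to $S^*b$, not on the one containing it.
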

\begin{proof}
Suppose $X_b$ is reducible. Then $$\cH(b)=M_1\oplus_b M_2, $$ where $M_1, M_2$ are nontrivial reducing subspaces of $X_b$.

Note that for every $f\in M_1, g\in M_2$,
$$
(I-X_bX_b^*)f \perp (I-X_bX_b^*)g
$$
in $\cH(b)$.
By Lemma \ref{xx*},  $$\m{dim}((I-X_bX_b^*)\cH(b))\leq 1.$$ Then one of the two range spaces $$(I-X_bX_b^*)M_1, (I-X_bX_b^*)M_2 $$ must be ${0}$.
WLOG, we may assume $$(I-X_bX_b^*)M_1=0,$$
i.e. for every $f\in M_1$,
$$
0=(I-X_bX_b^*)f=\la f, S^*b\ra_b S^*b.
$$
Thus $f$ is orthogonal to $S^*b$ in $\cH(b)$ and then $S^*b\in M_2$. Since $M_2$ is invariant under $S^*$, we have
$$\m{Span}\{S^{*n} b: n\geq 0\}\subset M_2.$$

If $b$ is extreme, it follows from Theorem \ref{ex} that $M_2=\cH(b)$, which is a contradiction. 

If $b$ is nonextreme, then polynomials are dense in $\cH(b)$.  We see from Theorem \ref{x^*} that $M_1$ is invariant under both $S$ and $S^*$. Pick a nonzero function $h\in M_1$, then 
$$
h(z)=\sum_{j=k}^\infty h_jz^j,
$$
for some $k\geq 0$ with $h_k\neq 0$. Thus 
$$
\frac{1}{h_k}(I-S^{k+1}S^{*k+1})h=z^k\in M_1,
$$
which implies that $M_1$ contain all the polynomials. So $M_1=\cH(b)$, which is a contradiction.
\end{proof}

For the extreme case, we have the following equivalent condition for the reducibility of $X_b^2$.
\begin{thm}\label{m1m2}
Let $b$ be an extreme point in $H^\infty_1$. Then $X_b^2$ is reducible if and only if there exist complex numbers $\Ga, \Gb$, $\Ga\Gb\neq 1$, such that
for every $n,m\geq 0$, 
\beq\label{c1}
S^{*2m}(S^*b+\Ga S^{*2}b)\perp S^{*2n}(\Gb S^*b+S^{*2}b)
\eeq
in $\cH(b)$.
In this case the reducing subspaces of $X_b^2$ are given by $$\cH(b)=M_1\oplus_b M_2,$$ where
\beq\label{m1}
M_1=\m{Span}\{ S^{*2n}(S^*b+\Ga S^{*2}b): n\geq 0\}
\eeq
and
\beq\label{m2}
M_2=\m{Span}\{ S^{*2n}(\Gb S^*b+S^{*2}b): n\geq 0\}.
\eeq
\end{thm}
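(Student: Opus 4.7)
The plan is to exploit a rank-two analogue of \eqref{xx*}. I would first compute
$$
I - X_b^2 X_b^{*2} = (I-X_bX_b^*) + X_b(I-X_bX_b^*)X_b^* = (S^*b)\otimes(S^*b) + (S^{*2}b)\otimes(S^{*2}b),
$$
using $X_b(S^*b) = S^{*2}b$ together with the identity $A(x\otimes y)A^* = (Ax)\otimes(Ay)$ recorded just before \eqref{xx*}. Hence the range of $I - X_b^2 X_b^{*2}$ lies in the at-most-two-dimensional subspace $E := \m{Span}\{S^*b, S^{*2}b\}$; the degenerate case $\dim E \le 1$ would force $\cH(b)$ to be one-dimensional via Theorem \ref{ex}, so the theorem is vacuous there and I may assume $\dim E = 2$.

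For the forward implication, suppose $\cH(b) = M_1 \oplus_b M_2$ is a nontrivial reducing decomposition. Each $M_i$ is invariant under $I - X_b^2 X_b^{*2}$, so the images $E_i := (I-X_b^2 X_b^{*2})(M_i) \subseteq E$ are mutually orthogonal in $\cH(b)$. The possibility $E_1 = \{0\}$ is ruled out exactly as in the preceding theorem: it would force $\la f, S^*b\ra_b = \la f, S^{*2}b\ra_b = 0$ for every $f \in M_1$, hence $S^*b, S^{*2}b \in M_2$; the $S^{*2}$-invariance of $M_2$ would then put $\{S^{*k}b : k \ge 1\}$ inside $M_2$, and Theorem \ref{ex} would give $M_2 = \cH(b)$, contradicting $M_1 \ne \{0\}$. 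Thus both $E_i$ are one-dimensional, and after normalizing I would select spanning vectors $v_1 = S^*b + \Ga S^{*2}b \in M_1$ and $v_2 = \Gb S^*b + S^{*2}b \in M_2$; linear independence of $v_1, v_2$ in $E$ is exactly the condition $\Ga\Gb \ne 1$. The $X_b^2$-invariance of $M_i$ yields $M_i \supseteq$ the right-hand side of \eqref{m1}--\eqref{m2}, and equality follows because the sum of these two closed spans is $S^{*2}$-closed and contains both $S^*b$ and $S^{*2}b$ (using $\Ga\Gb \ne 1$), hence all $S^{*k}b$ for $k \ge 1$, whose closed span is $\cH(b)$ by Theorem \ref{ex}. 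The orthogonality condition \eqref{c1} is then a verbatim restatement of $M_1 \perp M_2$.

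For the converse, given $\Ga, \Gb$ with $\Ga\Gb \ne 1$ satisfying \eqref{c1}, I would define $M_1, M_2$ by \eqref{m1}--\eqref{m2}. They are $S^{*2}$-invariant by construction, mutually orthogonal by \eqref{c1}, and the same span argument as above gives $M_1 + M_2 = \cH(b)$. The general fact that orthogonal complementary $T$-invariant subspaces are automatically $T^*$-invariant (applied to $T=X_b^2$) then shows each $M_i$ is reducing.

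The step I expect to be the most delicate is the normalization yielding the explicit form $v_1 = S^*b + \Ga S^{*2}b$, $v_2 = \Gb S^*b + S^{*2}b$: it cannot be performed verbatim if, say, $v_1$ lies in $\mathbb{C}\cdot S^{*2}b$ alone. I would handle this by noting that since $v_1, v_2$ together span the two-dimensional $E$, at least one of the two labellings succeeds for each vector, and swapping $M_1 \leftrightarrow M_2$ if necessary places the data in the stated form.
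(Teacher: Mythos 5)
Your proof is correct and follows essentially the same route as the paper: the same rank-two identity $I-X_b^2X_b^{*2}=(S^*b)\otimes(S^*b)+(S^{*2}b)\otimes(S^{*2}b)$, the same argument ruling out a trivial image $(I-X_b^2X_b^{*2})M_i=\{0\}$ via Theorem \ref{ex}, and the same span argument identifying $M_1,M_2$ with \eqref{m1}--\eqref{m2}. If anything you are more careful than the paper at two points it passes over with a ``WLOG'' --- the normalization of the spanning vectors when a coefficient vanishes, and the degenerate case where $S^*b$ and $S^{*2}b$ are linearly dependent --- so no changes are needed.
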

\begin{proof}
Suppose \eqref{c1} holds, then take $M_1, M_2$ as in \eqref{m1}, \eqref{m2}. It is clear that $M_1, M_2$ are invariant under $X_b^2$ (or $S^{*2}$) and are orthogonal in $\cH(b)$. By Theorem \ref{ex}, we have $$\cH(b)=\m{Span}\{M_1, M_2\}.$$
Thus $$\cH(b)=M_1\oplus_b M_2,$$ and $X_b^2$ is reducible.

Next we assume $X_b^2$ is reducible. Then $$\cH(b)=M_1\oplus_b M_2, $$ where $M_1, M_2$ are nontrivial reducing subspaces of $X_b^2$.
Note that for every $f\in M_1, g\in M_2$,
$$
(I-X_b^2X_b^{*2})f\in M_1, (I-X_b^2X_b^{*2})g\in M_2.
$$
Then
$$
(I-X_b^2X_b^{*2})f\perp (I-X_b^2X_b^{*2})g
$$
in $\cH(b)$.
Using \eqref{xx*}, we have
\begin{align}\label{xx*2}
I-X_b^2X_b^{*2}&=I-X_b(X_bX_b^*)X_b^*\\
\nnb&=I-X_b(I-X_bX_b^*)X_b^*\\
\nnb&=I-X_bX_b^*-X_b(S^*b\otimes S^*b)X_b^*\\
\nnb&=S^*b\otimes S^*b+S^{*2}b\otimes S^{*2}b.
\end{align}
Note that when $b$ is extreme, $S^*b$ and $S^{*2}b$ are linearly independent. Thus dim$(I-X_b^2X_b^{*2})\cH(b))=2$. Suppose one of the two range spaces $$(I-X_b^2X_b^{*2})M_1, (I-X_b^2X_b^{*2})M_2 $$ is zero, say $$(I-X_b^2X_b^{*2})M_1=0.$$ 
By \eqref{xx*2}, we see that every function in $M_1$ is orthogonal to $S^*b$ and $S^{*2} b$ in $\cH(b)$, which implies $S^*b, S^{*2}b$ are in $M_2$. Since $M_2$ is invariant for $X_b^2$, using Theorem \ref{ex} we see that
$$\cH(b)=\m{Span}\{S^{*n} b: n\geq 0\}\subset M_2.$$ This is a contradiction. Therefore, we must have
$$
\m{dim}(I-X_b^2X_b^{*2})M_1=\m{dim}(I-X_b^2X_b^{*2})M_2=1.
$$
This means, WLOG, there exist complex numbers $\Ga, \Gb$ such that
$$
(I-X_b^2X_b^{*2})M_1=\m{Span}\{S^*b+\Ga S^{*2}b \}\subset M_1, 
$$
$$
(I-X_b^2X_b^{*2})M_1=\m{Span}\{\Gb S^*b+ S^{*2}b \}\subset M_2.
$$
Since $M_1, M_2$ are invariant under $X_b^2$, we have
$$
\m{Span}\{S^{*2n}(S^*b+\Ga S^{*2}b): n\geq 0 \}\subset M_1, 
$$
$$
\m{Span}\{S^{*2n}(\Gb S^*b+ S^{*2}b): n\geq 0\}\subset M_2.
$$
Using Theorem \ref{ex}, we obtain $$\cH(b)=M_1\cup M_2,$$ and thus \eqref{m1}, \eqref{m2} hold. The relation \eqref{c1} follows from $M_1\perp_b M_2$.
Note that $\Ga\Gb\neq 1$; otherwise $M_1=M_2=0.$
\end{proof}

\section{Main Results}

In this section, we analyze the condition \eqref{c1} and characterize the reducibility of $X_b^2$ when $b$ is extreme but not inner.

\begin{lem}\label{I}
Let $b$ be an extreme point in $H^\infty_1$. Then for every $n\geq 1$,
$$
I-X_b^{*n}X_b^{n}=\sum_{j=0}^{n-1}(X_b^{*j}k_0^{b})\otimes (X_b^{*j}k_0^{b}).
$$
\end{lem}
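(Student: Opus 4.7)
The plan is to prove the identity by induction on $n$, using Lemma \ref{x*x} as the base case and a simple telescoping identity for the inductive step.

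For the base case $n=1$, the statement reads $I - X_b^{*}X_b = k_0^b \otimes k_0^b$, which is exactly Lemma \ref{x*x}.

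For the inductive step, assuming the formula holds for some $n \geq 1$, I would start from the algebraic identity
\begin{equation*}
I - X_b^{*(n+1)}X_b^{n+1} = (I - X_b^{*}X_b) + X_b^{*}(I - X_b^{*n}X_b^{n})X_b,
\end{equation*}
which is immediate (add and subtract $X_b^{*}X_b$). I would then substitute Lemma \ref{x*x} into the first summand and the inductive hypothesis into the second. The right-hand side then becomes
\begin{equation*}
k_0^{b}\otimes k_0^{b} \;+\; X_b^{*}\Bigl(\sum_{j=0}^{n-1}(X_b^{*j}k_0^{b})\otimes(X_b^{*j}k_0^{b})\Bigr)X_b.
\end{equation*}
Applying the rank-one operator rule $A(x\otimes y)B = (Ax)\otimes(B^{*}y)$ recorded in the excerpt, with $A = X_b^{*}$ and $B = X_b$ (so $B^{*} = X_b^{*}$), shifts both slots of each rank-one summand up by one power of $X_b^{*}$. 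After reindexing $j \mapsto j+1$, the sum becomes $\sum_{j=1}^{n}(X_b^{*j}k_0^{b})\otimes(X_b^{*j}k_0^{b})$, and absorbing the $j=0$ term from Lemma \ref{x*x} gives the desired formula for $n+1$.

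There is no real obstacle here; the only bookkeeping point is that the rank-one operator identity acts symmetrically on the two slots (directly on the left, via the adjoint on the right), which is why the same exponent $X_b^{*(j+1)}$ appears in both factors. The argument is essentially a clean conjugation/telescoping exercise that iterates the $n=1$ case.
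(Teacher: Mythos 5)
Your proof is correct and is essentially the same induction the paper uses: both arguments conjugate the inductive hypothesis by $X_b$ on the right and $X_b^{*}$ on the left, apply the rank-one rule $A(x\otimes y)B=(Ax)\otimes(B^{*}y)$, and absorb the $j=0$ term supplied by Lemma \ref{x*x}. The only difference is cosmetic bookkeeping (you telescope $I-X_b^{*(n+1)}X_b^{n+1}$ directly, while the paper expands $X_b^{*n}X_b^{n}$ first).
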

\begin{proof}
This proof is by induction on $n$. For $n=1$, the equality is exactly the one in Lemma \ref{x*x}.
Assume that the equality holds for some $n\geq 2$. Then, using once again Lemma \ref{x*x} and the induction hypothesis, we have
\begin{align*}
&X_b^{*n}X_b^{n}=X_b^*(X_b^{*n-1}X_b^{n-1})X_b\\
=&X^*(I-\sum_{j=0}^{n-2}(X_b^{*j}k_0^{b})\otimes (X_b^{*j}k_0^{b}))X_b\\
=&X^*X-\sum_{j=0}^{n-2}X_b^*(X_b^{*j}k_0^{b})\otimes (X_b^{*j}k_0^{b})X_b\\
=&I-k_0^{b}\otimes k_0^{b}-\sum_{j=0}^{n-2}(X_b^{*(j+1)}k_0^{b})\otimes (X_b^{*(j+1)}k_0^{b})\\
=&I-\sum_{j=0}^{n-1}(X_b^{*j}k_0^{b})\otimes (X_b^{*j}k_0^{b}).
\end{align*}
\end{proof}

\begin{lem}\label{2nm}
Let $b$ be an extreme point in $H^\infty_1$ and let $f, g\in \cH(b)$. Then 
\beq\label{nm}
\la X_b^{2m}f, X_b^{2n}g\ra_b=0,
\eeq
for every $m, n\geq 0$ if and only if the following hold
\begin{enumerate}
\item for every $k\geq 0$,
\beq\label{0}
\la T_{\bar b}f, T_{\bar b}X_b^{2k} g \ra_{\bar b}=\la T_{\bar b}g, T_{\bar b}X_b^{2k} f \ra_{\bar b}=0.
\eeq
\item for every $m, n\geq 0$,
$$
\la S^{*2m}f, S^{*2n}g\ra_2=0,
$$
i.e.
there exist functions $F, G\in H^2$ and complex numbers $a_0, b_0, a_1, b_1$ such that
\beq\label{fg}
f(z)=F(z^2)(a_0+a_1 z),\q g(z)=G(z^2)(b_0+b_1 z)
\eeq
and
$$a_0\bar{b_0}+a_1\bar{b_1}=0.$$
\end{enumerate}
\end{lem}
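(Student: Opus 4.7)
The plan is to use Theorem \ref{barb} together with the commutation $T_{\bar b}S^*=S^*T_{\bar b}$ to decompose the $\cH(b)$-inner product as $\la X_b^{2m}f,X_b^{2n}g\ra_b = A_{mn}+B_{mn}$, where $A_{mn}:=\la S^{*2m}f,S^{*2n}g\ra_2$ is an $H^2$ part and $B_{mn}:=\la S^{*2m}T_{\bar b}f,S^{*2n}T_{\bar b}g\ra_{\bar b}$ is an $\cH(\bar b)$ part. The whole lemma will follow from one rigidity observation: for extreme $b$, $B_{mn}$ depends only on $m-n$, and this rigidity is what forces the two summands of the vanishing sum $A_{mn}+B_{mn}=0$ to vanish separately.

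To establish that rigidity, I would use that $b$ extreme means $\log\rho\notin L^1$, so polynomials are dense in $L^2(\rho)$ and $H^2(\rho)=L^2(\rho)$. Consequently $Z_\rho$ is unitary on $H^2(\rho)$ with $Z_\rho^*$ equal to multiplication by $\bar z$. Writing $T_{\bar b}f=K_\rho\tilde u$ and $T_{\bar b}g=K_\rho\tilde v$ (using that $K_\rho$ is onto $\cH(\bar b)$ in the extreme case), the intertwining \eqref{int} iterated together with the isometry of $K_\rho$ yields $B_{mn}=\la Z_\rho^{*2m}\tilde u,Z_\rho^{*2n}\tilde v\ra_{H^2(\rho)}=\int_\TT z^{2(n-m)}\tilde u\,\ol{\tilde v}\,\rho\,\frac{d\Gt}{2\pi}$, a single Fourier coefficient of $w:=\tilde u\ol{\tilde v}\rho\in L^1(\TT)$, which depends only on $m-n$.

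For the forward direction, combining $A_{mn}+B_{mn}=0$ with the rigidity forces $A_{mn}=A_{m+k,n+k}$ for every $k\ge 0$; a Cauchy--Schwarz estimate $|A_{m+k,n+k}|\le\|S^{*2m+2k}f\|_2\,\|S^{*2n+2k}g\|_2\to 0$ as $k\to\infty$ then gives $A_{mn}\equiv 0$ (condition (2) in inner-product form), and hence $B_{mn}\equiv 0$ for all $m,n$, which contains (1) as the cases $m=0$ and $n=0$. Conversely, (2) directly gives $A_{mn}\equiv 0$, and (1) gives $\hat w(\pm 2k)=0$ for $k\ge 0$; together with the rigidity this yields $B_{mn}\equiv 0$ for all $m,n\ge 0$, so the sum vanishes.

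To finish I would verify the ``i.e.'' in (2). The telescoping identity $A_{mn}-A_{m+1,n+1}=f_{2m}\ol{g_{2n}}+f_{2m+1}\ol{g_{2n+1}}$ makes vanishing of all $A_{mn}$ equivalent to $(f_{2m},f_{2m+1})\perp(g_{2n},g_{2n+1})$ in $\mathbb{C}^2$ for every $m,n\ge 0$; this in turn forces the two sequences of $\mathbb{C}^2$-vectors to lie on a pair of orthogonal lines, producing the stated form with $a_0\ol{b_0}+a_1\ol{b_1}=0$. The converse is a direct computation using $S^{*2m}(F(z^2))=(S^{*m}F)(z^2)$ and the orthogonality of even and odd powers of $z$ in $H^2$. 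I expect the main obstacle to be the rigidity observation for $B_{mn}$, as it is the sole step where extremality of $b$ is essentially used; the rest reduces to Fourier-coefficient bookkeeping and an elementary $\mathbb{C}^2$ geometry argument.
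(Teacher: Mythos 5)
Your proof is correct, but it reaches the conclusion by a genuinely different route than the paper. The paper never splits the inner product at the outset; instead it applies the defect--operator identity of Lemma \ref{I} to write $X_b^{*2m}X_b^{2m}f=f-\sum_{j=0}^{2m-1}f_j X_b^{*j}k_0^b$, derives the recursion \eqref{11} entirely inside $\cH(b)$, subtracts consecutive instances to get the $\CC^2$-orthogonality $f_{2m}\ol{g}_{2n}+f_{2m+1}\ol{g}_{2n+1}=0$, and only at the very end invokes Theorem \ref{barb} to peel off condition \eqref{0}. You invert this order: Theorem \ref{barb} first, giving $A_{mn}+B_{mn}=0$, and then the $L^2(\rho)$-model in place of Lemma \ref{I}. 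Your rigidity claim $B_{mn}=B_{0,n-m}$ is correct (and is in fact equivalent to the paper's identity \eqref{11}, rewritten), and it is essentially the paper's Lemma \ref{bb} generalized from the special vectors $S^{*n}b$ to arbitrary elements of $\cH(\bar b)$ --- so your argument unifies two of the paper's lemmas and makes transparent that extremality enters solely through $H^2(\rho)=L^2(\rho)$. The decoupling of $A\equiv 0$ and $B\equiv 0$ via translation-invariance plus the decay $\|S^{*N}h\|_2\to 0$ is clean and arguably more conceptual than the paper's subtraction scheme. Two small points to make explicit: (i) you need $K_\rho$ to map $H^2(\rho)$ \emph{onto} $\cH(\bar b)$ in order to write $T_{\bar b}f=K_\rho\tilde u$; this is true (Sarason, II-3 and III-2) but is slightly more than the paper's Theorem \ref{rho} literally states, so cite it; (ii) the passage from ``$(f_{2m},f_{2m+1})\perp(g_{2n},g_{2n+1})$ for all $m,n$'' to the factored form \eqref{fg} tacitly assumes $f\neq 0\neq g$ (if, say, $f=0$ the orthogonality holds for every $g$, which need not factor); the paper glosses over the same degenerate case, and it is harmless in the application since there $f,g$ are nonzero. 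The final telescoping identity and the $\CC^2$ geometry coincide with the paper's.
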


\begin{proof}
Let
$$
f(z)=\sum_{k=0}^\infty f_k z^k\q\m{and}\q g(z)=\sum_{k=0}^\infty g_k z^k.
$$
Suppose \eqref{nm} holds. Then for $m\leq n$, we have
\beq\label{1}
0=\la X_b^{2m}f, X_b^{2n}g\ra_b=\la X_b^{*2m}X_b^{2m}f, X_b^{2n-2m}g\ra_b.
\eeq
By Lemma \ref{I}, we have
\begin{align*}
(I-X_b^{*2m}X_b^{2m})f&=f-X_b^{*2m}X_b^{2m}f\\
&=\sum_{j=0}^{2m-1}\la f, X_b^{*j}k_0^{b}\ra_b\cdot (X_b^{*j}k_0^{b})\\
&=\sum_{j=0}^{2m-1}\la X_b^j f, k_0^{b}\ra_b\cdot  (X_b^{*j}k_0^{b})\\
&=\sum_{j=0}^{2m-1}(S^{*j} f)(0)\cdot (X_b^{*j}k_0^{b})\\
&=\sum_{j=0}^{2m-1}f_j\cdot X_b^{*j}k_0^{b}.
\end{align*}
Then
$$
X_b^{*2m}X_b^{2m} f=f- \sum_{j=0}^{2m-1}f_j\cdot X_b^{*j}k_0^{b}.
$$
This together with \eqref{1} implies
\begin{align}\label{11}
\nnb0&=\la f- \sum_{j=0}^{2m-1}f_j\cdot X_b^{*j}k_0^{b}, X_b^{2n-2m}g\ra_b\\
\nnb&=\la f, X_b^{2n-2m}g\ra_b -\la \sum_{j=0}^{2m-1}f_j\cdot  X_b^{*j}k_0^{b}, X_b^{2n-2m}g\ra_b\\
\nnb&=\la f, X_b^{2n-2m}g\ra_b -\sum_{j=0}^{2m-1}f_j\cdot \la   X_b^{*j}k_0^{b}, X_b^{2n-2m}g\ra_b\\
\nnb&=\la f, X_b^{2n-2m}g\ra_b -\sum_{j=0}^{2m-1}f_j\cdot \la  k_0^{b}, X_b^{2n-2m+j}g\ra_b\\
\nnb&=\la f, X_b^{2n-2m}g\ra_b -\sum_{j=0}^{2m-1}f_j\cdot \ol{\la X_b^{2n-2m+j}g, k_0^{b} \ra_b}\\
&=\la f, X_b^{2n-2m}g\ra_b -\sum_{j=0}^{2m-1}f_j\cdot \ol{g}_{2n-2m+j}.
\end{align}
Replacing $n, m$ in \eqref{11} by $n+1, m+1$ respectively, we have
\beq\label{2}
0=\la f, X_b^{2n-2m}g\ra_b -\sum_{j=0}^{2m+1}f_j\cdot \ol{g}_{2n-2m+j}.
\eeq
Subtracting \eqref{2} by \eqref{11} implies 
\beq\label{3}
f_{2m}\ol{g}_{2n}+f_{2m+1}\ol{g}_{2n+1}=0,
\eeq
for $m\leq n$.
A similar argument shows that \eqref{3} also holds for $n\leq m$. Thus we have for every $n, m\geq 0$, the two vectors
$$(f_{2m},f_{2m+1}), (g_{2n},g_{2n+1})$$ are orthogonal in $\CC^2$.
It is easy to check $f, g$ must have the form \eqref{fg}. 
In particular, we have 
\beq\label{t1}
\la f, X_b^{2k} g \ra_2=\la g, X_b^{2k} f \ra_{2}=0, \q\m{for every}\,\,k\geq 0.
\eeq
It follows from \eqref{11} and \eqref{3} that 
$$
\la f, X_b^{2k} g \ra_b=\la g, X_b^{2k} f \ra_{b}=0, \q\m{for every}\,\,k\geq 0.
$$
This together with \eqref{t1} and Theorem \ref{barb} give \eqref{0}.

The sufficiency follows easily from the calculation in \eqref{11}.

\end{proof}

\begin{rem}
When $b$ is an inner function, $\cH({\bar b})$ is trivial and then \eqref{0} is automatically satisfied. One may expect the reducibility of $X_b^2$ is more restrictive if $b$ is not inner. We shall see it is true in the remaining of this section.
\end{rem}

When $b$ is extreme, the following Lemma will be used to calculate the inner products in \eqref{0}.
\begin{lem}\label{bb}
Let $b$ be an extreme point in $H^\infty_1$. Let $\rho=1-|b|^2$ on $\TT$. Then for every $m,n \geq 1$, 
$$
\la T_{\bar b}S^{*m}b, T_{\bar b}S^{*n}b\ra_{\bar b}=\left\{
\begin{aligned}
&-\la z^{n-m},|b|^2 \ra_2, &m<n, \\
&-\la |b|^2, z^{m-n}\ra_2, &m>n,\\
&1-||b||_2^2, &m=n.
\end{aligned}
\right.     
$$
\end{lem}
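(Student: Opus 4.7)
The plan is to push the entire computation into the weighted space $L^2(\rho)$ via the Cauchy transform $K_\rho$ of Theorem \ref{rho}, and then read off the three cases as Fourier coefficients of $\rho$. The key conceptual point is that the hypothesis ``$b$ extreme'' is precisely what places the coanalytic monomials $\bar z^m$ inside $H^2(\rho)$, so that the $K_\rho$-isometry can be invoked.

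First I would identify the vectors $T_{\bar b}S^{*m}b$ as values of $-K_\rho$. Using the identity $T_{\bar b}T_{\bar z^m}=T_{\bar b\bar z^m}$ (which is standard for products of coanalytic symbols: the only error term sits in $\overline{H^2_0}$ and is annihilated by $P$), we get, for every $m\geq 1$,
$$
T_{\bar b}S^{*m}b\;=\;P(\bar b\bar z^m b)\;=\;P(|b|^2\bar z^m)\;=\;P(\bar z^m)-P(\rho\bar z^m)\;=\;-K_\rho(\bar z^m),
$$
since $P(\bar z^m)=0$ for $m\geq 1$.

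Next I would invoke extremality. By the classical de~Leeuw--Rudin characterization, $b$ is an extreme point of $H^\infty_1$ if and only if $\log\rho\notin L^1(\TT)$; by Szeg\H{o}'s theorem this forces $H^2(\rho)=L^2(\rho)$. In particular $\bar z^m\in H^2(\rho)$ for every $m\geq 0$, and Theorem \ref{rho} applies to give
$$
\la T_{\bar b}S^{*m}b,\,T_{\bar b}S^{*n}b\ra_{\bar b}\;=\;\la K_\rho(\bar z^m),\,K_\rho(\bar z^n)\ra_{\bar b}\;=\;\la \bar z^m,\bar z^n\ra_{L^2(\rho)}\;=\;\int_\TT z^{n-m}\rho\,\frac{d\Gt}{2\pi}.
$$

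To finish I would split into the three cases on the sign of $n-m$, using $\rho=1-|b|^2$ together with $\int_\TT z^k\,d\Gt/(2\pi)=0$ unless $k=0$: for $m=n$ the integral is $\int\rho = 1-\|b\|_2^2$; for $m<n$ it becomes $-\int z^{n-m}|b|^2 = -\la z^{n-m},|b|^2\ra_2$; and for $m>n$, using that $|b|^2$ is real, $-\int z^{-(m-n)}|b|^2 = -\overline{\la z^{m-n},|b|^2\ra_2} = -\la |b|^2,z^{m-n}\ra_2$. These are exactly the three expressions in the statement. The only real \emph{obstacle} is spotting that ``$b$ extreme'' is precisely what makes $\bar z^m\in H^2(\rho)$; once that is observed, the rest is a short Fourier calculation on the circle.
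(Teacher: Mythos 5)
Your proposal is correct and follows essentially the same route as the paper: transfer the computation into $L^2(\rho)$ via the isometry $K_\rho$, use extremality (via Szeg\H{o}) to get $H^2(\rho)=L^2(\rho)$, and read off the Fourier coefficients of $\rho=1-|b|^2$. The only cosmetic difference is that you identify $T_{\bar b}S^{*m}b=-K_\rho(\bar z^m)$ directly by Toeplitz symbol calculus, whereas the paper reaches the same identification through the intertwining relation $K_\rho Z_\rho^{*}=S^{*}K_\rho$ and the unitarity of $Z_\rho$.
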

\begin{proof}
Suppose $m\leq n$.
Using the intertwining relation \eqref{int}, we can easily get
$$
K_\rho Z_\rho^{*n}=S^{*n}K_\rho.
$$
Thus 
\begin{align*}
K_\rho Z_\rho^{*n}1=&S^{*n}K_\rho 1=S^{*n}P(\rho)=S^{*n}P(1-|b|^2)\\
=&-S^{*n}P(|b|^2)=-S^{*n}T_{\bar b} b=-T_{\bar b} S^{*n}b.
\end{align*}
By Theorem \ref{rho}, we have 
$$
\la T_{\bar b}S^{*m}b, T_{\bar b}S^{*n}b\ra_{\bar b}=\la K_\rho Z_\rho^{*m}1, K_\rho Z_\rho^{*n}1 \ra_{\bar b}= \la  Z_\rho^{*m}1, Z_\rho^{*n}1 \ra_{L^2(\rho)}.
$$
If $b$ is extreme, then $H^2(\rho)=L^2(\rho)$ \cite{sze20}, which implies $Z_\rho$ is a unitary operator.
Then 
\begin{align*}
\la T_{\bar b}S^{*m}b, T_{\bar b}S^{*n}b\ra_{\bar b}=& \la  Z_\rho^{*m}1, Z_\rho^{*n}1 \ra_{L^2(\rho)}=\la  Z_\rho^{n-m}1, 1 \ra_{L^2(\rho)}=\la z^{n-m}, 1\ra_{L^2(\rho)}\\
=&\la z^{n-m},1\ra_2-\la z^{n-m},|b|^2 \ra_2\\
=&
\left\{
\begin{aligned}
&-\la z^{n-m},|b|^2 \ra_2, &m<n, \\
&1-||b||^2_2, &m=n.
\end{aligned}
\right.       
\end{align*}
\end{proof}

We also need the following three elementary results. 

\begin{lem}\label{lim}
Let $b\in H^\infty_1$. Then
$$
\lim_{n\to\infty}\la z^n, |b|^2\ra_2=0.
$$
\end{lem}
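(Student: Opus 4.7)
The plan is to recognize $\langle z^n, |b|^2\rangle_2$ as (up to conjugation) the $n$-th Fourier coefficient of the function $|b|^2$ on the unit circle, and then invoke a standard decay result for Fourier coefficients.

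More concretely, I would write
\[
\la z^n, |b|^2\ra_2 = \int_{\TT} z^n\,\overline{|b(z)|^2}\,\frac{d\theta}{2\pi} = \int_\TT z^n |b(z)|^2\,\frac{d\theta}{2\pi},
\]
using that $|b|^2$ is real-valued. Since $b\in H^\infty_1$, the function $|b|^2$ belongs to $L^\infty(\TT)\subset L^2(\TT)$, so by Bessel's inequality its Fourier coefficients are square summable. In particular, the $n$-th coefficient tends to $0$ as $n\to\infty$. (Alternatively, it would suffice to note that $|b|^2\in L^1(\TT)$ and apply the Riemann--Lebesgue lemma.)

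There is no real obstacle here; the only thing to be careful about is fixing the Fourier coefficient convention correctly so that $\langle z^n,\cdot\rangle_2$ truly picks out a coefficient that is being summed against, rather than one that is omitted (i.e., distinguishing analytic versus anti-analytic sides). Since $n\ge 0$ in the application, and $|b|^2$ is merely in $L^2(\TT)$ rather than $H^2$, the conclusion is the same regardless: square summability of all two-sided Fourier coefficients forces $\langle z^n,|b|^2\rangle_2\to 0$. The entire proof should occupy two or three lines.
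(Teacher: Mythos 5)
Your proof is correct, and it takes a slightly different route from the paper's. You view $\la z^n,|b|^2\ra_2$ as a Fourier coefficient of the real-valued function $|b|^2\in L^\infty(\TT)\subset L^2(\TT)$ and conclude by Bessel's inequality (or Riemann--Lebesgue for $|b|^2\in L^1$). The paper instead avoids citing any such theorem: it writes $b=\sum_k b_k z^k$, computes $\la z^n,|b|^2\ra_2=\la z^n b, b\ra_2=\sum_{k} b_k\ol{b_{n+k}}$, and bounds this by Cauchy--Schwarz as $\|b\|_2\bigl(\sum_k |b_{n+k}|^2\bigr)^{1/2}$, which tends to $0$ because it is the tail of the convergent series $\sum_k|b_k|^2$. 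The two arguments are close in spirit --- the paper's computation is essentially a hands-on verification of the decay of these particular Fourier coefficients --- but yours outsources the decay to a standard theorem while the paper's is entirely self-contained and uses only that $b\in H^2$. Your caveat about the coefficient convention is handled correctly: since $|b|^2$ is merely real-valued and square-integrable, all two-sided coefficients are square-summable, so the sign of the index is immaterial.
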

\begin{proof}
Let $$b(z)=\sum_{k=0}^\infty b_k z^k.$$ Then
\begin{align*}
|\la z^n, |b|^2\ra_2&=|\la z^nb, b\ra_2|=|\sum_{k=0}^\infty b_k\ol{b_{n+k}}|\\
&\leq (\sum_{k=0}^\infty|b_k|^2)^{1\over 2}(\sum_{k=0}^\infty|b_{n+k}|^2)^{1\over 2}=||b||_2(\sum_{k=0}^\infty|b_{n+k}|^2)^{1\over 2}.
\end{align*}
Since $||b||_2\leq 1$, we have 
$$
\lim_{n\to\infty}|\la z^n, |b|^2\ra_2|\leq (\lim_{n\to\infty}\sum_{k=0}^\infty|b_{n+k}|^2)^{1\over 2}=0.
$$
\end{proof}

\begin{lem}\label{even}
Let $b\in H^\infty$. Then $|b|^2$ is even if and only if $b$ is even or odd. 
\end{lem}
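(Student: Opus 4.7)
The plan is to reduce the claim to a boundary identity in the even/odd decomposition of $b$ and then to invoke $H^\infty$ boundary uniqueness to conclude. The forward direction is immediate: if $b(-z) = \pm b(z)$ then certainly $|b(-z)|^2 = |b(z)|^2$, so I focus on the converse.

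First, I would write $b(z) = b_e(z) + b_o(z)$ with $b_e(z) = B_e(z^2)$ and $b_o(z) = z\,B_o(z^2)$, where $B_e(w) = \sum_{k\ge 0} b_{2k} w^k$ and $B_o(w) = \sum_{k\ge 0} b_{2k+1} w^k$ collect the even-indexed and odd-indexed Taylor coefficients of $b$; both lie in $H^\infty$. A direct computation gives
\[
|b(z)|^2 - |b(-z)|^2 = 2\,b_e(z)\,\overline{b_o(z)} + 2\,\overline{b_e(z)}\,b_o(z),
\]
so the hypothesis that $|b|^2$ is even translates to $b_e\,\overline{b_o} + \overline{b_e}\,b_o \equiv 0$ on $\TT$. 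Substituting $b_e(z) = B_e(z^2)$, $b_o(z) = z\,B_o(z^2)$ and multiplying through by $z$, this becomes
\[
B_e(w)\,\overline{B_o(w)} + w\,\overline{B_e(w)}\,B_o(w) = 0\qquad\text{on }\TT,
\]
where $w = z^2$ (as $z$ ranges over $\TT$, so does $w$).

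Second, I would aim to conclude that one of $B_e, B_o$ vanishes identically. If either $B_e \equiv 0$ or $B_o \equiv 0$, then $b(z) = B_e(z^2)$ is even or $b(z) = z\,B_o(z^2)$ is odd, and we are done. So assume both are nonzero; their boundary values are then nonzero almost everywhere on $\TT$ (since $\log|f| \in L^1(\TT)$ for a nonzero $f \in H^\infty$). Dividing the identity above by $\overline{B_e}\,\overline{B_o}$ a.e.\ yields
\[
\frac{B_e(w)}{\overline{B_e(w)}} = -w\,\frac{B_o(w)}{\overline{B_o(w)}}\qquad\text{a.e.\ on }\TT,
\]
a rigid equality between two unimodular functions on $\TT$.

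The main obstacle is converting this rigid argument constraint into the identical vanishing of one of $B_e$ or $B_o$. I plan to invoke inner-outer factorization $B_e = I_e O_e$ and $B_o = I_o O_o$: outer functions are determined up to a unimodular constant by their boundary modulus, so the displayed identity, together with the modulus information $|B_e|, |B_o|$ read off from $|b|^2$ being a function of $z^2$, should force the desired vanishing. Once one of $B_e, B_o$ is shown to be zero, the conclusion is as stated above. I expect this last step to be the delicate part of the argument; the preliminary reduction is routine bookkeeping.
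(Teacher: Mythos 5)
Your forward direction and the reduction of the converse to the boundary identity $B_e\overline{B_o} + w\,\overline{B_e}\,B_o = 0$ a.e.\ on $\TT$ are both correct, and up to notation this is the same relation ($\operatorname{Re}(b_0\overline{zb_1})=0$) that the paper's proof reaches. But your argument stops exactly where the work is: you announce that inner--outer factorization ``should force the desired vanishing'' of $B_e$ or $B_o$ and defer the delicate step. That step is not a formality, and in fact it cannot be carried out from the boundary identity alone. Take $B_e(w)=1+w$ and $B_o(w)=it$ with $t$ real and nonzero; on $\TT$ one has $B_e\overline{B_o}+w\overline{B_e}B_o=-it(1+w)+it\,w(1+\overline{w})=-it(1+w)+it(w+1)=0$, yet neither factor vanishes. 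Equivalently, $b(z)=1+itz+z^2=z(2\cos\theta+it)$ on $\TT$ satisfies $|b(e^{i\theta})|^2=4\cos^2\theta+t^2$, which is invariant under $z\mapsto -z$, while $b$ is neither even nor odd. So no amount of inner--outer bookkeeping on the circle will close your gap: the implication you reduced the lemma to is false at the level of boundary values, and the extra ``modulus information'' you hope to extract is vacuous (evenness only gives $|b|^2=|B_e(z^2)|^2+|B_o(z^2)|^2$, which does not separate $|B_e|$ from $|B_o|$).

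The statement is correct only if ``$|b|^2$ is even'' is read as the pointwise identity $|b(z)|^2=|b(-z)|^2$ for all $z\in\DD$, and that is how the paper's one-line proof must be understood: from $\operatorname{Re}\bigl(b_0(z)\overline{zb_1(z)}\bigr)\equiv 0$ on the \emph{disk}, polarization (the function $f(z)\overline{g(\overline{w})}+\overline{f(\overline{w})}g(z)$ is holomorphic on $\DD^2$ and vanishes on the totally real set $w=\overline{z}$, hence identically) yields $b_0=c\,zb_1$ with $c$ purely imaginary, and an even function equal to an odd one must vanish; this is the content of the paper's assertion that $\operatorname{Re}(b_0\overline{zb_1})\equiv 0$ is equivalent to $b_0\overline{zb_1}\equiv 0$. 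Your proof discards exactly this interior rigidity by passing immediately to $\TT$, and the example above shows the loss is fatal. (It is fair to note that the paper itself later invokes the lemma from the vanishing of the odd Fourier coefficients of the boundary function $|b|^2$, i.e.\ under the circle reading, so the tension you have run into is present in the source as well; but as a proof of the lemma under the disk reading, your argument has an unfillable gap where the paper's argument, tersely, does not.)
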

\begin{proof}
Let $b(z)=b_0(z)+zb_1(z)$, where $b_0, b_1$ are even functions. Then $|b|^2$ is even if and only if 
$$
|b_0(z)+zb_1(z)|^2=|b_0(z)-zb_1(z)|^2,
$$
which is equivalent to
$b_0\ol{zb_1}\equiv 0$. Then the conclusion follows easily.
\end{proof}

\begin{lem}\label{an}
Let $\Ga,\Gb\in\CC$ with $\Ga\Gb\neq 0 \,\,\m{or}\,\, 1$. Let $\{a_n\}_{n=0}^\infty$ be a sequence of complex numbers but not the zero sequence. Suppose $$\lim_{n\to\infty}a_n=0$$ and for every $n\geq 1$, the following conditions hold.
\beq\label{an1}
a_{2n+1}+(\Ga+\bar{\Gb})a_{2n}+\Ga\bar{\Gb}a_{2n-1}=0,\eeq
\beq\label{an2}
a_{2n+1}+(\frac{1}{\bar{\Ga}}+\frac{1}{\Gb})a_{2n}+\frac{1}{\bar{\Ga}\Gb}a_{2n-1}=0.\eeq
Then we have either $$\Gb=-\bar{\Ga},\q\m{and}\q a_{2n-1}=0,\q \m{for every}\,\, n\geq 1$$
or
$$|\Ga|=|\Gb|=1.$$
\end{lem}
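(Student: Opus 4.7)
My plan is to eliminate $a_{2n+1}$ from the two recurrences \eqref{an1} and \eqref{an2}, reducing the system to a two-term relation coupling $a_{2n}$ and $a_{2n-1}$, and then use the decay $a_n\to 0$ together with the non-triviality of $\{a_n\}$ to force the conclusion.

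Subtracting \eqref{an2} from \eqref{an1} yields, for every $n\geq 1$, an identity I label $(\dagger)$:
\[\Bigl(\Ga+\bar{\Gb}-\tfrac{1}{\bar{\Ga}}-\tfrac{1}{\Gb}\Bigr)a_{2n}+\Bigl(\Ga\bar{\Gb}-\tfrac{1}{\bar{\Ga}\Gb}\Bigr)a_{2n-1}=0.\]
Using $1/\bar{\Ga}=\Ga/|\Ga|^2$ and $1/\Gb=\bar{\Gb}/|\Gb|^2$, the coefficient of $a_{2n-1}$ in $(\dagger)$ equals $\Ga\bar{\Gb}(|\Ga|^2|\Gb|^2-1)/(|\Ga|^2|\Gb|^2)$, vanishing precisely when $|\Ga||\Gb|=1$. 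This dichotomy drives the case analysis.

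If $|\Ga||\Gb|=1$, $(\dagger)$ collapses to $(|\Ga|^2-1)(\tfrac{1}{\bar{\Ga}}-\bar{\Gb})\,a_{2n}=0$. Either $|\Ga|=1$, whence $|\Gb|=1$ and the second alternative holds; or this coefficient is nonzero (since $\Ga\Gb\neq 1$), forcing $a_{2n}=0$ for every $n\geq 1$, after which \eqref{an1} reduces to $a_{2n+1}=-\Ga\bar{\Gb}\,a_{2n-1}$ with $|\Ga\bar{\Gb}|=1$; the moduli $|a_{2n-1}|$ are then constant, and decay forces $a_{2n-1}=0$ too, making the tail of $\{a_n\}$ zero and contradicting non-triviality. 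If instead $|\Ga||\Gb|\neq 1$, $(\dagger)$ reads $a_{2n-1}=\lambda\,a_{2n}$ for an explicit scalar $\lambda$. When $\lambda\neq 0$, substituting back into \eqref{an1} and shifting indices produces a geometric recursion $a_{2n+2}=\mu\,a_{2n}$. Writing $\Ga=re^{i\theta}$, $\Gb=se^{i\phi}$ and $A=r-1/r$, $B=s-1/s$, an explicit calculation yields
\[\mu=e^{i(\theta-\phi)}\,\frac{B\,e^{i\theta}+A\,e^{-i\phi}}{A\,e^{i\theta}+B\,e^{-i\phi}},\]
whose numerator and denominator share the modulus $\sqrt{A^2+B^2+2AB\cos(\theta+\phi)}$. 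Hence $|\mu|=1$, decay forces $a_{2n}=0$ and thus $a_{2n-1}=\lambda a_{2n}=0$, again violating non-triviality. We are therefore left with $\lambda=0$, i.e., $\Ga+\bar{\Gb}=\tfrac{1}{\bar{\Ga}}+\tfrac{1}{\Gb}$; then $(\dagger)$ gives $a_{2n-1}=0$ directly, \eqref{an1} reduces to $(\Ga+\bar{\Gb})\,a_{2n}=0$, and non-triviality of the even tail forces $\Ga+\bar{\Gb}=0$, i.e., $\Gb=-\bar{\Ga}$, yielding the first alternative.

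The main obstacle I anticipate is verifying $|\mu|=1$ in the case $|\Ga||\Gb|\neq 1$: without the polar parametrization the identity is not transparent, yet it is precisely this equality that eliminates the generic situation and drives the dichotomy in the conclusion. A secondary bookkeeping issue is threading the exclusion $\Ga\Gb\neq 0,1$ and the non-triviality of $\{a_n\}$ through each subcase, so that every branch lands in exactly one of the two stated alternatives rather than in a degenerate configuration where the tail vanishes.
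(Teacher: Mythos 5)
Your proof is correct and follows essentially the same route as the paper: subtract the two recurrences to get a two-term relation between $a_{2n}$ and $a_{2n-1}$, split into cases according to which coefficients vanish, and in the generic case show the transition ratio along one parity class is unimodular so that $a_n\to 0$ forces the tail to vanish. The only differences are cosmetic — you verify unimodularity of $\mu$ via the polar parametrization $A=r-1/r$, $B=s-1/s$, where the paper observes directly that the numerator and denominator of its ratio are complex conjugates, and you track the even-indexed subsequence where the paper tracks the odd one.
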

\begin{proof}
Subtracting \eqref{an2} from \eqref{an1}, we have
\beq\label{an3}
(\Ga+\bar{\Gb}-\frac{1}{\bar{\Ga}}-\frac{1}{\Gb})a_{2n}+(\Ga\bar{\Gb}-\frac{1}{\bar{\Ga}\Gb})a_{2n-1}=0.
\eeq
Since $\{a_n\}_{n=0}^\infty$ is nonzero, we have the following four cases.

Case I: $\Ga+\bar{\Gb}-\frac{1}{\bar{\Ga}}-\frac{1}{\Gb}=\Ga\bar{\Gb}-\frac{1}{\bar{\Ga}\Gb}=0.$
Then we have $|\Ga\Gb|=1$ and 
\begin{align*}
0=&\Ga+\bar{\Gb}-\frac{1}{\bar{\Ga}}-\frac{1}{\Gb}=\frac{1}{\bar{\Ga}}(1-|\Ga|^2)+\frac{1}{\Gb}(1-|\Gb|^2)\\
=&\frac{1}{\bar{\Ga}}(1-|\Ga|^2)+\frac{1}{\Gb}(1-\frac{1}{|\Ga|^2})=\frac{1-|\Ga|^2}{\bar{\Ga}}(1-\frac{1}{\Ga\Gb}).
\end{align*}
Thus $|\Ga|=|\Gb|=1.$

Case II: $\Ga+\bar{\Gb}=\frac{1}{\bar{\Ga}}+\frac{1}{\Gb}$ and $a_{2n-1}=0$, for every $n\geq 1$. Then \eqref{an1} implies $\Gb=-\bar{\Ga}.$

Case III: $\Ga\bar{\Gb}=\frac{1}{\bar{\Ga}\Gb}$ and $a_{2n}=0$, for every $n\geq 1$. Then $|\Ga\Gb|=1$ and by \eqref{an1}, we have
$$
|a_{2n+1}|=|\Ga\Gb|\cdot|a_{2n-1}|=|a_{2n-1}|.
$$
Since $a_n$ tends to $0$, we have $a_{2n-1}=0$ and thus $\{a_n\}_{n=0}^\infty$ is the zero sequence, which contradicts the assumption. 

Case IV: $\Ga+\bar{\Gb}-\frac{1}{\bar{\Ga}}-\frac{1}{\Gb}\neq 0$ and $\Ga\bar{\Gb}-\frac{1}{\bar{\Ga}\Gb}\neq 0.$
Then by \eqref{an3}, 
$$
a_{2n}=\frac{\frac{1}{\bar{\Ga}\Gb}-\Ga\bar{\Gb}}{\Ga+\bar{\Gb}-\frac{1}{\bar{\Ga}}-\frac{1}{\Gb}} a_{2n-1}=\frac{1-|\Ga\Gb|^2}{\Gb|\Ga|^2+\bar{\Ga}|\Gb|^2-\Gb-\bar{\Ga}} a_{2n-1}.
$$
Put this in \eqref{an1}, we have
\begin{align*}
a_{2n+1}&=-(\Ga+\bar{\Gb})a_{2n}-\Ga\bar{\Gb}a_{2n-1}\\
&=-\Big( (\Ga+\bar{\Gb})\frac{1-|\Ga\Gb|^2}{\Gb|\Ga|^2+\bar{\Ga}|\Gb|^2-\Gb-\bar{\Ga}}+\Ga\bar{\Gb} \Big) a_{2n-1}\\
&=\frac{\Ga|\Gb|^2+\bar{\Gb}|\Ga|^2-\Ga-\bar{\Gb}}{\Gb|\Ga|^2+\bar{\Ga}|\Gb|^2-\Gb-\bar{\Ga}}a_{2n-1}.
\end{align*}
Thus $|a_{2n+1}|=|a_{2n-1}|$ and, similar to Case III, $a_{2n-1}=0$, for every $n\geq 1$. From \eqref{an1}, \eqref{an2}, we see that
either $a_{2n}=0$, for every $n\geq 1$ or $\Ga+\bar{\Gb}=\frac{1}{\bar{\Ga}}+\frac{1}{\Gb}=0.$ They are both excluded by the assumptions.
\end{proof}

Now we are ready to prove the main Theorem. 

\begin{thm}\label{M}
Let $b$ be an extreme point in $H^\infty_1$. If $b$ is not an inner function, then $X_b^2$ is reducible if and only if $b$ is even or odd. If $b$ is even, the reducing subspaces of $X_b^2$ are
$$
M=\m{Span}\{ (S^{*2n}b) (z+\Ga): n\geq 1\}
$$
with
$$
M^{\perp}=\m{Span}\{ (S^{*2n}b) (-\bar{\Ga}z+1): n\geq 1\},
$$
for all $\Ga\in\CC$.

If $b$ is odd, the reducing subspaces of $X_b^2$ are
$$
M=\m{Span}\{ S^{*2n-1}b: n\geq 1\}
$$
with
$$
M^{\perp}=\m{Span}\{ S^{*2n}b: n\geq 1\}.
$$
\end{thm}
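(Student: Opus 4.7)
The strategy is to reduce reducibility of $X_b^2$, via Theorem~\ref{m1m2}, to the orthogonality relation \eqref{c1}, split it by Lemma~\ref{2nm} into an $\cH(\bar b)$-side condition~(1) and a Taylor-coefficient structural condition~(2), and then exploit Lemma~\ref{bb} (which converts the inner products appearing in~(1) into the Fourier coefficients $c_n := \widehat{|b|^2}(n)$) together with Lemma~\ref{an}.

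For the forward direction, assume $X_b^2$ is reducible with parameters $\alpha, \beta$ ($\alpha\beta \neq 1$) coming from Theorem~\ref{m1m2}, and set $f = S^*b + \alpha S^{*2}b$, $g = \beta S^*b + S^{*2}b$. I would expand condition~(1) bilinearly, apply Lemma~\ref{bb}, and obtain for every $k \geq 1$ the two recurrences
\begin{align*}
c_{2k+1} + (\alpha+\bar\beta)\,c_{2k} + \alpha\bar\beta\, c_{2k-1} &= 0,\\
c_{2k+1} + \bigl(\tfrac{1}{\bar\alpha} + \tfrac{1}{\beta}\bigr)c_{2k} + \tfrac{1}{\bar\alpha\beta}\, c_{2k-1} &= 0.
\end{align*}
When $\alpha\beta \neq 0$ these are the hypotheses \eqref{an1} and \eqref{an2} of Lemma~\ref{an}; since $\{c_n\}$ is nonzero (as $b$ is not inner) and tends to zero by Lemma~\ref{lim}, that lemma yields either (a) $\beta = -\bar\alpha$ with $c_{2n-1} = 0$ for all $n \geq 1$, so that $|b|^2$ is even and $b$ is even or odd by Lemma~\ref{even}; or (b) $|\alpha| = |\beta| = 1$. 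In case~(b) the next step is to invoke condition~(2): writing $b(z) = b_e(z^2) + z\,b_o(z^2)$ and matching the even and odd coefficient functions of $f$ and $g$ produces linear relations which, combined with the orthogonality $a_0\bar{b_0} + a_1\bar{b_1} = 0$ and with $\alpha\beta \neq 1$, force $\beta = -\bar\alpha$ and the geometric recursion $b_{2j+3} = \bar\alpha^{2}\,b_{2j+1}$; since $|\bar\alpha| = 1$ while $b \in H^2$ has square-summable coefficients, all odd $b_k$ vanish and $b$ is even. The degenerate sub-case $\alpha\beta = 0$ I handle separately: direct substitution into condition~(1) forces $c_{\mathrm{odd}} = 0$ and then $\alpha = \beta = 0$ (otherwise $b$ would be inner), corresponding to $b$ odd.

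For the converse and the explicit descriptions of $M$ and $M^\perp$: if $b$ is even, the identity $S^{*(2k+1)}b = z \cdot S^{*(2k+2)}b$ (immediate from $b_{\mathrm{odd}} = 0$) rewrites $f$ as $(z+\alpha)S^{*2}b$ and $g$ as $(1-\bar\alpha z)S^{*2}b$ when $\beta = -\bar\alpha$; since every $S^{*2j}b$ is even, the identity $S^{*2}((z+\alpha)h) = (z+\alpha)S^{*2}h$ (which holds whenever the coefficient of $z$ in $h$ is zero) iterates to yield the stated $M$ and $M^\perp$ for any $\alpha \in \mathbb{C}$, and both conditions of Lemma~\ref{2nm} are then verified directly from $c_{\mathrm{odd}} = 0$. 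If $b$ is odd, take $\alpha = \beta = 0$: then $f = S^*b$ is even, $g = S^{*2}b$ is odd, condition~(2) is automatic from parity, and condition~(1) reduces via Lemma~\ref{bb} to $c_{\mathrm{odd}} = 0$, which again holds. The main obstacle will be case~(b) in the forward direction: Lemma~\ref{an} does not on its own exclude non-even/odd $b$ on the circle $|\alpha| = |\beta| = 1$, so condition~(2) must be invoked to extract the required geometric progression on the odd Taylor coefficients of $b$.
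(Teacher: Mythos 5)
Your proposal follows essentially the same route as the paper's proof: Theorem \ref{m1m2} reduces reducibility to the orthogonality condition \eqref{c1}, Lemma \ref{2nm} splits it into the $\cH(\bar b)$-side condition and the Taylor-coefficient condition, Lemmas \ref{bb}, \ref{lim}, \ref{even} and \ref{an} handle the former exactly as you describe, and condition (2) is then used to dispose of the case $|\alpha|=|\beta|=1$ and to verify sufficiency. The only divergence is in the last computational step: where you propose to extract a geometric recursion on the odd Taylor coefficients from the factorization \eqref{fg}, the paper instead conjugates the diagonal identity $\langle S^{*2n}f,S^{*2n}g\rangle_2=0$ and uses $|\alpha|=|\beta|=1$ to obtain $(\bar\alpha-\beta)|b_{2n+1}|^2=0$ directly, reaching the same conclusion that $b$ is even.
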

\begin{proof}
 
{\bf Necessity.} We assume $X_b^2$ is reducible and $b$ is not inner.
Let $$b(z)=\sum_{k=0}^\infty b_k z^k.$$ 
By Theorem \ref{m1m2}, there exists $\Ga, \Gb\in\CC$ such that $\Ga\Gb\neq 1$ and \eqref{c1} holds. 
Let 
$$
f=S^*b+\Ga S^{*2}b,  \q g=\Gb S^*b+S^{*2}b.
$$
Then $f,g$ are in $\cH(b)$, and using Lemma \ref{2nm}, we have
$$
\la T_{\bar b}f, T_{\bar b}X_b^{2n} g \ra_{\bar b}=\la T_{\bar b}g, T_{\bar b}X_b^{2n} f \ra_{\bar b}=0,
$$
for every $n\geq 0$.
If $n\geq 1$, using Lemma \ref{bb}, we have
\begin{align*}
\nnb 0=&\la T_{\bar b}f,\,\, T_{\bar b}X_b^{2n} g \ra_{\bar b}\\
\nnb=&\la T_{\bar b}S^*b+\Ga T_{\bar b}S^{*2}b, \,\, \Gb T_{\bar b} (S^*)^{2n+1}b+T_{\bar b}(S^*)^{2n+2}b  \ra_{\bar b}\\
\nnb=&\bar{\Gb}\la T_{\bar b}S^*b, T_{\bar b}(S^*)^{2n+1}b\ra_{\bar b}+\la T_{\bar b}S^*b, T_{\bar b}(S^*)^{2n+2}b\ra_{\bar b}\\
\nnb&+\Ga\bar{\Gb}\la T_{\bar b}S^{*2}b, T_{\bar b}(S^*)^{2n+1}\ra_{\bar b}+\Ga \la T_{\bar b}S^{*2}b, T_{\bar b}(S^*)^{2n+2}b\ra_{\bar b}\\
\nnb=&-\bar{\Gb}\la z^{2n}, |b|^2 \ra_2 -\la z^{2n+1}, |b|^2 \ra_2 -\Ga\bar{\Gb}\la z^{2n-1}, |b|^2 \ra_2 -\Ga \la z^{2n}, |b|^2 \ra_2,
\end{align*}
which can be simplified to
\beq\label{ab1}
\la z^{2n+1}, |b|^2 \ra_2 +(\Ga+\bar{\Gb})\la z^{2n}, |b|^2 \ra_2 +\Ga\bar{\Gb}\la z^{2n-1}, |b|^2 \ra_2 =0.
\eeq

Similarly, 
$$
\la T_{\bar b}g, T_{\bar b}X_b^{2n} f \ra_{\bar b}=0
$$
implies
\beq\label{ab2}
\bar{\Ga}\Gb\la z^{2n+1}, |b|^2 \ra_2 +(\bar{\Ga}+{\Gb})\la z^{2n}, |b|^2 \ra_2 +\la z^{2n-1}, |b|^2 \ra_2=0.
\eeq

If $\Ga=\Gb=0$, then \eqref{ab2} implies for every $n\geq 1$,
$$
0=\la z^{2n-1}, |b|^2\ra_2=\la |b|^2, \bz^{2n-1} \ra_2.
$$
This means $b$ is even or odd by Lemma \ref{even}.

If $\Ga=0$ and $\Gb\neq 0$, using \eqref{ab1}, \eqref{ab2}, we have
$$
\la z^{2n+1}, |b|^2 \ra_2 +\bar{\Gb}\la z^{2n}, |b|^2 \ra_2 =0,
$$
and 
$$
{\Gb}\la z^{2n}, |b|^2 \ra_2 +\la z^{2n-1}, |b|^2 \ra_2 =0.
$$
Thus
$$
|\la z^{2n+1}, |b|^2 \ra_2 |=|\frac{\bar{\Gb}}{\Gb}\la z^{2n-1}, |b|^2 \ra_2 |=|\la z^{2n-1}, |b|^2 \ra_2 |.
$$
By Lemma \ref{lim}, we see that for every $n\geq 1$,
$
\la z^{2n-1}, |b|^2 \ra_2 =0.
$
Thus \eqref{ab1} shows that $\la z^n, |b|^2\ra_{2}=0,$ which implies $b$ is inner. A similar argument works for the case when $\Gb=0$ and $\Ga\neq 0$.

Next, suppose $\Ga\Gb\neq 0$. Rewrite \eqref{ab2} as 
\beq\label{ab3}
\la z^{2n+1}, |b|^2 \ra_2 +(\frac{1}{\bar{\Ga}}+\frac{1}{\Gb})\la z^{2n}, |b|^2 \ra_2 +\frac{1}{\bar{\Ga}\Gb}\la z^{2n-1}, |b|^2 \ra_2=0.
\eeq
Consider the sequence $\{\la z^{n}, |b|^2\ra_2\}_{n=1}^\infty$. If it is the zero sequence, then $b$ is an inner function. Otherwise by Lemma \ref{lim} and \eqref{ab1}, \eqref{ab3}, it satisfies the assumptions in Lemma \ref{an}. Then we have the following two cases: 

Case I: $\Gb=-\bar{\Ga}$, $\la z^{2n-1}, |b|^2 \ra_2 =0$, for every $n\geq 1$. 

Case II: $|\Ga|=|\Gb|=1.$

By condition $(2)$ in Lemma \ref{2nm}, we have for every $n\geq 0$, $$\la S^{*2n}f, S^{*2n}g \ra_2=0.$$  
Then
\begin{align*}
0=&\la S^{*2n}f, S^{*2n}g \ra_2=\la S^{*2n+1}b+\Ga S^{*2n+2}b, \Gb S^{*2n+1}b+S^{*2n+2}b \ra_2\\
=&\bar{\Gb}||S^{*2n+1}b||_2^2+\Ga ||S^{*2n+2}b||_2^2+\la S^{*2n+1}b, S^{*2n+2}b\ra_2+\Ga\bar{\Gb} \la S^{*2n+2}b, S^{*2n+1}b\ra_2.
\end{align*}
For simplicity, let 
$$
c_n=\la S^{*2n+1}b, S^{*2n+2}b\ra_2.
$$ 
Since 
$$
||S^{*2n+2}b||_2^2=||S^{*2n+1}b||_2^2-|b_{2n+1}|^2,
$$
we obtain
\beq\label{b1}
(\bar{\Gb}+\Ga)||S^{*2n+1}b||_2^2-\Ga|b_{2n+1}|^2+c_n+\Ga\bar{\Gb}\bar{c_n}=0.
\eeq

In Case I, $|b|^2$ is even, and Lemma \ref{even} implies that $b$ is even or odd. Thus $c_n=0$ and \eqref{b1} becomes
$\Ga|b_{2n+1}|^2=0$, which implies $b_{2n+1}=0$ and $b$ is even. 

In Case II, taking conjugate on \eqref{b1}, we get
\beq\label{b2}
({\Gb}+\bar{\Ga})||S^{*2n+1}b||_2^2-\bar{\Ga}|b_{2n+1}|^2+\bar{\Ga}\Gb c_n+\bar{c_n}=0.
\eeq
Multiplying \eqref{b1} by $\bar{\Ga}\Gb$ and using $|\Ga|=|\Gb|=1$, we have
\beq\label{b3}
(\bar{\Ga}+\Gb)||S^{*2n+1}b||_2^2-\Gb|b_{2n+1}|^2+\bar{\Ga}\Gb c_n+\bar{c_n}=0.
\eeq
By \eqref{b2}, \eqref{b3}, we have
$$
(\bar{\Ga}-\Gb)|b_{2n+1}|^2=0.
$$
Note that $\bar{\Ga}\neq \Gb$ because $\Ga\Gb\neq 1$. We see that $b_{2n+1}=0$, which means $b$ is even.
Using \eqref{ab1}, we see that if $b$ is not inner, then $\Gb=-\bar{\Ga}$.

{\bf Sufficiency.} 
Let 
$$
M_1=\m{Span}\{ S^{*2n}(S^*b+\Ga S^{*2}b): n\geq 0\}
$$
and
$$
M_2=\m{Span}\{ S^{*2n}(-\bar{\Ga} S^*b+S^{*2}b): n\geq 0\}.
$$
We show that $M_1,M_2$ are reducing subspaces of $X_b^2$ for appropriate choices of $\Ga$. By Theorem \ref{m1m2} and Lemma \ref{2nm}, we need to verify \eqref{0} and \eqref{fg} when $\Gb=-\bar{\Ga}$.

Note that $$
\la z^{2n-1}, |b|^2\ra_2=0,\q\m{for every}\,\, n\geq 1.
$$
whenever $b$ is even or odd.

For \eqref{0}, if $n\geq 1$, \eqref{0} follows from \eqref{ab1}, \eqref{ab2} and the above relation. 
When $n=0$, using Lemma \ref{bb}, we have
\begin{align*}
&\la T_{\bar b}(S^*b+\Ga S^{*2}b),\,\, T_{\bar b}(-\bar{\Ga} S^*b+S^{*2}b) \ra_{\bar b}\\
=& -\Ga||T_{\bar b}S^*b||_{\bar b}^2+\Ga ||T_{\bar b}S^{*2}b||_{\bar b}^2+\la T_{\bar b}S^*b, T_{\bar b}S^{*2}b\ra_{\bar b}-\Ga^2\la T_{\bar b}S^{*2}b, T_{\bar b}S^{*}b\ra_{\bar b}\\
=& -\Ga(1-||b||_2^2)+\Ga (1-||b||_2^2)- \la z, |b|^2\ra_2+\Ga^2 \la \bz, |b|^2\ra_2=0.
\end{align*}

If $b$ is odd and $\Ga=0$, it is obvious that \eqref{fg} holds.

If $b$ is even, then $S^{*2}b$ is also even and $$S^{*}b=z S^{*2}b.$$
We can write 
$$
S^*b+\Ga S^{*2}b=(S^{*2}b)(z+\Ga),
$$
and
$$
-\bar{\Ga} S^*b+S^{*2}b= (S^{*2}b)(-\bar{\Ga}z+1).
$$
Thus \eqref{fg} is satisfied.

\end{proof}

\bibliographystyle{plain}
\bibliography{references}

\end{document}